\theoremstyle{plain}
\newtheorem{theorem}{Theorem}
\newtheorem{assumption}[theorem]{Assumption}
\newtheorem{proposition}[theorem]{Proposition}
\newtheorem{definition}[theorem]{Definition}
\newtheorem{remark}[theorem]{Remark}
\newtheorem{corollary}[theorem]{Corollary}
\newtheorem{notation}[theorem]{Notation}
\newtheorem*{result*}{Result}
\newtheorem*{remarque*}{Remark}
\newcommand{\e}{\mathrm{e}}
\newcommand{\dr}{\mathrm{d}}
\newcommand{\sausage}{\mathscr{S}}
\newcommand{\dmu}{\mathrm{D}_\mu}
\title{Brownian particles controlled by their occupation measure}
\author[*]{Loïc Béthencourt}
\author[*]{Rémi Catellier}
\author[$\dagger$]{Etienne Tanré}
	\affil[*]{Université Côte D'Azur, CNRS, LJAD}
	\affil[$\dagger$]{Université Côte D'Azur, Inria, CNRS, LJAD}
\date{April 10, 2024}
\begin{document}

\maketitle

\begin{abstract}
In this article, we study a finite horizon linear-quadratic stochastic control problem for Brownian particles, where the cost functions depend on the state and the occupation measure of the particles. To address this problem, we develop an Itô formula for the flow of occupation measure, which enables us to derive the associated Hamilton-Jacobi-Bellman equation. Then, thanks to a Feynman-Kac formula and the Boué-Dupuis formula \cite{boue1998variational}, we construct an optimal strategy and an optimal trajectory. Finally, we illustrate our result when the cost-function is the volume of the sausage associated to the particles.\\[3pt]
\textit{Keywords:} Stochastic optimal control, Occupation measure, Brownian particles, Calculus on the space of measures, Hamilton-Jacobi-Bellman equations, Boué-Dupuis formula.\\[3pt]
\textit{MSC2020 AMS classification: 93E20, 49J55, 60G57, 49L12}
\end{abstract}

\section{Introduction}
In this article, we consider $n$ random particles $(X_t)_{t\geq0} =(X_t^1, \cdots, X_t^n)_{t\geq0}$ living in $\mathbb{R}^d$ for $d,n \geq1$, and aim at controlling
them  through their occupation 
measure $\mu_t = \sum_{k=1}^n\int_0^t \delta_{X_s^k} \dr s$. The latter belongs to the set of finite measures on $\mathbb{R}^d$ with compact support, which we denote by 
$\mathscr{M}_c^d$. The particles in question are Brownian particles which only interact through the control. More precisely, let \((B^1_t)_{t\geq0}, \cdots, (B^n_t)_{t\geq0}\)
be $n$ independent \(d\)-dimensional Brownian 
motions on some probability space $(\Omega, \mathcal{F}, \mathbb{P})$. We denote \(B = (B^1,\cdots,B^n)\) and consider the (completed) filtration 
$\mathbb{F} = (\mathcal{F}_t)_{t\geq0}$ generated by $(B_t)_{t\geq0}$. For an adapted process $\alpha = (\alpha^1_t, \cdots, \alpha^n_t)_{t\geq0}$, we  consider the particle system 
$(X_t^\alpha)_{t\geq0} = (X_t^{1, \alpha}, \cdots, X_t^{n, \alpha})_{t\geq0}$ as well as its occupation measure $(\mu_t^\alpha)_{t\geq0}$, defined for any $t\geq0$ 
and \(k\in \{1, \cdots, n\} \) by
\[
 X_t^{k,\alpha}  = \int_0^t \alpha^k_s \dr s + B^k_t\quad\text{and} \quad \mu_t^\alpha = \sum_{k=1}^n\int_0^t \delta_{X_s^{k,\alpha}} \dr s.
\]
Let $\mathcal{A}$ denote the class of progressively measurable processes. Consider some finite horizon $T >0$ and two measurable functions $f,g: \mathscr{M}_c^d \times (\mathbb{R}^{d})^n \to \mathbb{R}$. Our aim is to solve the following minimization problem:
\begin{equation}\label{control_problem_intro}
 J(T) = \inf_{\alpha \in \mathcal{A}}\mathbb{E}\Big[g(\mu_T^\alpha, X_T^\alpha) + \int_0^Tf(\mu_s^\alpha, X_s^\alpha) \dr s + \frac{1}{2}\sum_{k=1}^n\int_0^T |\alpha^k_s|^2 \dr s\Big].
\end{equation}
To this end, we will add the occupation measure in the state space, and regard $(\mu_t^\alpha, X_t^\alpha)_{t\geq0}$ as a couple. 
In order to derive the Hamilton-Jacobi-Bellman (HJB) equation associated with our control problem, we  establish an Itô calculus for the flow 
$t\mapsto(\mu_t^\alpha, X_t^\alpha)$. Let us stress that this HJB equation is a PDE on $\mathbb{R}_+ \times \mathscr{M}_c^d \times (\mathbb{R}^{d})^n$. 
The linear-quadratic framework in which we place ourselves is quite convenient, for two main reasons : \textit{(i)} it allows us to classically solve the HJB equation 
\thanks to a \textit{Hopf-Cole} transform and \textit{(ii)} we can rely on the Boué-Dupuis formula \cite{boue1998variational}, which gives an expression 
of \eqref{control_problem_intro} as a $\log$ Laplace transform of some functional on the Wiener space. In particular, we do not have to establish a Dynamic 
Programming Principle and a verification theorem. This framework also enables us to identify the optimal control $\alpha^*$ and to define a process $(\mu_t^*, X_t^*)_{t\in[0,T]}$ 
which is controlled in an optimal way. Although properly stated in Theorem \ref{main_thm}, our main result is, in essence, the following.
\begin{result*}
 Under some regularity assumptions on the functions $f$ and $g$, there exist a continuous function 
 $\alpha^* : [0,T]\times \mathscr{M}_c^d \times (\mathbb{R}^{d})^n \to \mathbb{R}$ and a filtered probability space 
 $(\Omega, \mathcal{G}, (\mathcal{G}_t)_{t\in[0,T]}, \mathbb{Q})$ supporting a $(\mathcal{G}_t)_{t\in[0,T]}$-adapted 
 process $(\mu_t^*, X_t^{1,*}, \cdots, X_t^{n,*})_{t\in[0,T]}$ such that for any $t\in[0,T]$ and \(k\in\{1,\cdots,n\}\),
\[
 X_t^{k,*}  = \int_0^t \alpha^{k,*}(s, \mu_s^*, X_s^{*}) \dr s + B^k_t, \quad \mu_t^* = \sum_{k=1}^n\int_0^t \delta_{X_s^{k, *}} \dr s
\]
where $(B^1_t)_{t\in[0,T]}, \cdots, (B^n_t)_{t\in[0,T]}$ are \(n\) independent $(\mathcal{G}_t)_{t\in[0,T]}$-Brownian motions in \(\mathbb{R}^d\). 
Moreover, it holds that
\[
 J(T) = \mathbb{E}_{\mathbb{Q}}\Big[g(\mu_T^*, X_T^*) + \int_0^Tf(\mu_s^*, X_s^*) \dr s + \frac{1}{2}\sum_{k=1}^n\int_0^T |\alpha^{k,*}(s, \mu_s^*, X_s^*)|^2 \dr s\Big].
\]
\end{result*}

Let us now comment on the related literature. First, we stress that the Itô calculus for occupation measures outlined in this article is not completely novel and 
closely resembles the framework developed in Tissot-Daguette's recent work \cite{tissotdaguette2023occupied}. Yet, this work mostly concerns one-dimensional 
processes with an emphasis on local times for semimartingales and can not be directly applied to our context. We also emphasize that this calculus involves 
functions on the space of measures and functional calculus on $\mathscr{M}_c^d$, which has seen numerous applications in the last two decades with the 
study of stochastic control for McKean-Vlasov equations and Mean Field games, see for instance~\cite{CD1, CD2}. In particular, we will see that the 
\textit{linear functional derivative}, also called the \textit{flat derivative}, plays a crucial role in our analysis. In contrast with the McKean-Vlasov framework 
where $\mu_t$ is deterministic and is the law of $X_t$, the measure $\mu_t$ considered here is random and can be seen as a process with finite variations 
(for any function $\varphi$, $(\langle \mu_t, \varphi \rangle)_{t\geq0}$ has finite variations). As such, the Itô formula obtained in Proposition~\ref{general_ito} 
only differs from the usual one by an additional term featuring a first order derivative.

Since the occupation measure $\mu_t$ is a functional of the whole path $(X_s)_{s\in[0,t]}$, one could also treat the minimisation problem 
\eqref{control_problem_intro} with the recently developed \textit{path-dependent} approach. Initiated by the seminal work of Dupire \cite{dupire}, 
where the author introduces an Itô calculus for functionals on the space of continuous functions, this theory has since seen numerous works on the subject, 
see \cite{fournie, pham_zang, sporito} for some path-dependent control problems and \cite{MR3161485, MR3474470, MR4337713, cosso2023pathdependent} 
for studies of path-dependent HJB equations. However, this approach is often quite involved whereas the approach with occupation measures turns out to be 
fairly simple, with the advantage that one can compute the optimal control explicitly in our case. We refer to Tissot-Daguette \cite{tissotdaguette2023occupied} 
for a link between the calculus for occupation measures and the path-dependent functional calculus.

Let us say a few words on a problem which partly motivates the study of \eqref{control_problem_intro}, namely the modeling of the evolution of filamentous fungus. In the work of Catellier, D'Angelo and Ricci \cite{catellier_angelo}, the branching filaments which form the mycelium are modeled by branching kinetic diffusions that interact with the entire network as well as with their environment. In the work of Toma\v sevi\'c, Bansaye and Véber \cite{MR4516847}, these networks are modeled by multi-type growth fragmentation processes. We would like to add the following postulate to the model: \textit{in a homogeneous environment, the fungal network optimizes the occupied space}. If we see the network not through the trace left by the processes, but through a slight thickening of this trace, we then aim at maximizing the volume of the ``sausage'' of the branching structure, which is a function of the occupation measure. We can treat here a similar problem for particles which do not branch.
This brings us to the main application of our result.

We illustrate our result with a particular example, outlined in Section \ref{sec:5}, corresponding to the case where the function $f = 0$ and the function $g$ is the opposite of the volume of the sausage defined by the particles. In other terms, we control $n$ Brownian motions in order to maximize their volume. Let us recall that, for $\rho > 0$, the $\rho$-sausage of the particles $(X_t^1, \cdots, X_t^n)$ at time $t \geq0$ is the set $A$ defined by
\[
 A = \bigcup_{k=1}^n \bigcup_{s\in[0,t]}\bar{B}(X_s^k, \rho)
\]
where $\bar{B}(X_s^k, \rho)$ is the closed ball of radius $\rho$, centered in $X_s^k$. The function $g$ is then defined by $-m(A)$ where $m$ denotes the Lebesgue measure. If we reformulate this in terms of occupation measure, we see that for any $\nu \in \mathscr{M}_c^d$, we have
\[
 g(\nu) = -m(\{y \in\mathbb{R}^d, \: \dr(y, \mathrm{supp}(\nu)) \leq 1\}).
\]
Unfortunately, this function is not regular enough for us to apply directly our result. 
We exhibit a sequence $(g_\ell)_{\ell \in \mathbb{N}}$ of smooth functions which approximates the function $g$ 
as $\ell\to \infty$. Thanks to our main result, we get for each $\ell \in \mathbb{N}$, $n$ particles $(X_t^{k,*,\ell})_{t\geq0}$ 
controlled in an optimal way. Then, we show that the process $(X_t^{*,\ell})_{t\geq0} = (X_t^{1,*,\ell}, \cdots, X_t^{n,*,\ell})_{t\geq0}$ 
converges in law as $\ell \to \infty$, see Theorem \ref{main_thm_application}. As we shall see, the law of the limiting process 
$(X_t^{*,\infty})_{t\geq0}$ is given by a Gibbs measure with respect to the Wiener measure where the energy is the function $g$. 
As such, it is closely related to the polymer measure (in the case $n=1$), also called \textit{Edward's model}, studied by 
Varadhan \cite{varadhan1969appendix} for $d=2$ and by Westwater \cite{MR853758, MR667754} for $d=3$ 
(see also Bolthausen \cite{MR1240717}). In this model, the law of the polymer chain is given by a Gibbs measure with respect to the 
Wiener measure where the energy is the self-intersection local time. Since maximizing the volume of the sausage more or less amounts 
to penalize the self-intersection local time, we see that these models should be closely related. This limiting process is yet another self-repelling process.

Let us also stress that, by construction, the dynamics of the optimal trajectory $(X_t^*)_{t\in[0,T]} = (X_t^{1,*}, \cdots, X_t^{n,*})_{t\in[0,T]}$ is driven by its own occupation measure. 
Beyond the control problem that interests us here, this kind of dynamics appears in a significant amount of works, starting with the work of Durrett and Rogers~\cite{MR1165516} on the so-called Brownian polymers, and the works of Benaïm, Ledoux and Raimond~\cite{benaim2002self} and Benaïm and Raimond~\cite{benaim_2,benaim_3,benaim_4} on self-interacting diffusions.
 On the other hand, dynamics and control problems driven by probability measures have been at play in many works, for McKean-Vlasov equations or Mean Field games, see~\cite{CD1, CD2}. 
For McKean-Vlasov dynamics driven by renormalized occupation measures and its applications, we refer to the recent works of Du, Jiang and Li~\cite{MR4580925} and Du, Ren, Suciu and Wang~\cite{du2023self}.

\subsection*{Plan of the paper}

The plan of the paper is as follows: in Section \ref{sec:2}, we recall some important notions of calculus in the space of finite measures with compact support. At the end of this section, we derive a chain-rule formula for a flow of occupation-time measures (see Proposition \ref{chain_rule_occupation}) which, as stated, is a generalization of the one obtained in Tissot-Daguette \cite{tissotdaguette2023occupied}.

In Section \ref{sec:3}, we first obtain an Itô formula for the process $(\mu_t, X_t, H_t)_{t\geq0}$ in a general case where $(X_t)_{t\geq0}$ and $(H_t)_{t\geq0}$ are some continuous semimartingales. We then specify this formula when $(X_t)_{t\geq0}$ is a diffusion process and establish a Feynman-Kac formula for the associated PDE on the space $\mathbb{R}_+ \times \mathscr{M}_c^d \times (\mathbb{R}^d)^n$.

In Section \ref{sec:4}, we finally study the control problem \eqref{control_problem_intro} by solving the associated HJB equation and relying extensively on the Boué-Dupuis formula. This leads us to state and prove our main result, see Theorem~\ref{main_thm}.

Finally, in Section \ref{sec:5}, we illustrate our result, controlling Brownian particles by the volume of their sausage, see Theorem~\ref{main_thm_application}.

\section{Calculus on the space of measures}\label{sec:2}

As explained above, we first derive some general results concerning calculus on the space of finite measures. 
In Section~\ref{sec:21}, we start by reminding the basics of spaces of finite measures. 
Section~\ref{sec:22} is devoted to the related calculus.

\subsection{Basics on the space of finite measures}\label{sec:21}
	
	In the following, we  denote by $\mathscr{M}_c^d$ the space of finite measures on $(\mathbb{R}^d, \mathcal{B}(\mathbb{R}^d))$ with compact support. 
	This space is endowed with the weak topology, \textit{i.e.} the weakest topology which makes the 
	maps $\mu \mapsto \int_{\mathbb{R}^d}\varphi \dr \mu =: \langle \mu, \varphi \rangle$ continuous for any $\varphi \in C_b(\mathbb{R}^d)$, 
	where $C_b(\mathbb{R}^d)$ is the space of continuous and bounded functions from $\mathbb{R}^d$ to $\mathbb{R}$. As we shall see, this topology is metrizable.

It is a rather classical consequence of Tychonoff's embedding theorem, see for instance \cite[Th, 17.8]{willard} and \cite[Chap. 1, page 9]{stroock_var}, that if $\rho$ is an equivalent metric on $\mathbb{R}^d$ which makes it a totally bounded space, then the space $U_\rho(\mathbb{R}^d)$ of bounded and uniformly continuous functions for the metric $\rho$ is separable. Let us consider such a metric $\rho$ and a dense sequence $(\varphi_n)_{n\in\mathbb{N}}$ of $U_\rho(\mathbb{R}^d)$ such that the constant function $\bm{1}$ belongs to this sequence. We introduce the distance $\dr$ on $\mathscr{M}_c^d$ defined for any $\mu,\nu \in \mathscr{M}_c^d$ as
\[
 \dr (\mu, \nu) = \sum_{n=0}^\infty 2^{-n}[1 \wedge |\langle \mu, \varphi_n \rangle - \langle \nu, \varphi_n \rangle|].
\]
This metric induces on $\mathscr{M}_c^d$ the weak topology, see for instance \cite[Th. 1.1.2]{stroock_var} with some minor adaptations to the space $\mathscr{M}_c^d$. 

\subsubsection*{Compact subsets of $\mathscr{M}_c^d$}

The proofs in this work require compactness arguments on the spaces $\mathscr{M}_c^d$. Therefore we first describe some compact subsets of this space. 
For $A, B >0$, we introduce the set
\begin{equation}\label{compact_set}
 \mathscr{K}_{A,B} = \{\mu \in \mathscr{M}_c^d, \: \mu(\mathbb{R}^d) \leq A \text{ and }\mathrm{supp}(\mu) \subset [-B, B]^d\}.
\end{equation}
We also introduce for $A > C > 0$ and $B> 0$ the set $\mathscr{K}_{A,B,C} = \{\mu \in \mathscr{K}_{A,B}, \: \mu(\mathbb{R}^d) \geq C\}$. 
To show that $\mathscr{K}_{A,B,C}$ is compact, we mainly paraphrase Ocello \cite[Sec. 4.2]{antonio2023controlled}. 
The set $\mathscr{K}_{A,B,C}$ is homeomorphic to $\mathcal{P}_B \times [C, A]$ where 
$\mathcal{P}_B = \{\mu \in \mathcal{P}(\mathbb{R}^d), \:\mathrm{supp}(\mu) \subset [-B, B]^d\}$ and $\mathcal{P}(\mathbb{R}^d)$ 
is the set of probability measures on $\mathbb{R}^d$. The corresponding homeomorphism is $\mu \mapsto (\mu / \mu(\mathbb{R}^d), \mu(\mathbb{R}^d))$. 
The set $\mathcal{P}_B$ is obviously tight and therefore by Prokhorov's theorem, it is relatively compact. It is also closed and as a consequence, it is compact. 
At this point, we conclude that $\mathscr{K}_{A,B,C}$ is a compact set since it is homeomorphic to a product of compact spaces. 
Let us now prove that $\mathscr{K}_{A,B}$ is also compact. Consider a sequence $(\mu_n)_{n\in\mathbb{N}}$  in $\mathscr{K}_{A,B}$. 
Then either
\begin{enumerate}[label=(\roman*)]
 \item there exists $k_1 \geq 2$ such that $\#\{n \in \mathbb{N}, \: \mu_n \in \mathscr{K}_{A,B,A / {k_1}}\} = \infty$. In this case, we can extract a subsequence which lies in $\mathscr{K}_{A,B,A / {k_1}}$ and since it is a compact space, we can further extract a converging subsequence, whose limit belongs to $\mathscr{K}_{A,B,A / {k_1}}$.
 
 \item for any $k \geq 2$, we have $\#\{n \in \mathbb{N}, \: \mu_n \in \mathscr{K}_{A,B,A / k}\} < \infty$. In this case, we can extract a subsequence $(\mu_{n_k})_{k\geq 2}$ such that for any $k\geq 2$, $\mu_{n_k}(\mathbb{R}^d) < A / k$. Then, this subsequence converges to the null measure, which is a point of $\mathscr{K}_{A,B}$.
\end{enumerate}
This proves that $\mathscr{K}_{A,B}$ is indeed a compact set.

\subsection{Chain-rule formula for the flow of finite measures}\label{sec:22}

Let us define the derivability of functions defined on $\mathscr{M}_c^d$. 
First for $\mu \in \mathscr{M}_c^d$, the space $\mathrm{L}^1(\mu)$ denotes the space of Borel functions which are integrable with respect to $\mu$.

\begin{definition}\label{def_linear_deri}
 A function $u : \mathscr{M}_c^d \to \mathbb{R}$ is said to have a linear functional derivative if it is continuous and if there exists a mapping $\delta_\mu u : \mathscr{M}_c^d\times \mathbb{R}^d \to \mathbb{R}$ satisfying the following properties
 \begin{enumerate}[label=(\roman*)]
  \item \(\delta_\mu u \) is continuous for the product topology.
  \item For any $\mu,\nu \in \mathscr{M}_c^d$, the function 
  $x \mapsto \sup_{\lambda \in [0,1]}|\delta_\mu u (\lambda\nu + (1-\lambda)\mu, x)|$ is in $\mathrm{L}^1(\mu)\cap \mathrm{L}^1(\nu)$.
  \item For any $\mu,\nu \in \mathscr{M}_c^d$, we have
 \begin{equation}\label{def_derivative_mes}
  u(\nu) - u(\mu) = \int_0^1 \int_{\mathbb{R}^d}\delta_\mu u (\lambda\nu + (1-\lambda)\mu, x)(\nu - \mu)(\dr x) \dr \lambda.
 \end{equation}
 \end{enumerate}
The set of such functions is denoted by $\mathrm{C}^1(\mathscr{M}_c^d)$.
\end{definition}

\begin{proof}
 We quickly check here that the quantity $\int_0^1 \int_{\mathbb{R}^d}\delta_\mu u (\lambda\nu + (1-\lambda)\mu, x)(\nu - \mu)(\dr x) \dr \lambda$ is well-defined. 
 First for any $(\mu,x)\in\mathscr{M}_c^d\times\mathbb{R}^d$, the map $\lambda \mapsto \delta_\mu u (\lambda\nu + (1-\lambda)\mu, x)$ is continuous on $[0,1]$ 
 since $\mu \mapsto\delta_\mu u (\mu, x)$ is continuous. Since $\sup_{\lambda \in [0,1]}|\delta_\mu u (\lambda\nu + (1-\lambda)\mu, x)| \in \mathrm{L^1}(\mu)$, 
 it comes that the map $\lambda \mapsto \int_{\mathbb{R}^d}\delta_\mu u (\lambda\nu + (1-\lambda)\mu, x)\nu(\dr x)$ is continuous on $[0,1]$ and thus, is integrable. 
 The same argument applies to the other term.
\end{proof}
\begin{remark}
	In our setting of compact support, the requirement \textit{(ii)} in Definition~\ref{def_linear_deri} can be deduced from \textit{(i)}. However, 
	in the more general context of  finite measures (without compactness assumption), this condition cannot be removed.
Usually, authors assume that the linear derivative is bounded. In our work, we use integration by parts arguments which somehow requires 
instead linear  growth conditions.
Although this extension is quite straightforward, it requires some technicalities that we develop below.
	\end{remark}
 Regarding the uniqueness of the linear derivative, we have the following result, which is more or less known, see for instance Ren and Wang \cite{MR4379563}.
\begin{proposition}
 Let $u\in \mathrm{C}^1(\mathscr{M}_c^d)$, then for any $(\mu, x)\in \mathscr{M}_c^d\times \mathbb{R}^d$, it holds that
 \[
  \frac{u(\mu + \varepsilon \delta_x) - u(\mu)}{\varepsilon} \longrightarrow \delta_\mu u(\mu, x) \quad \text{as }\varepsilon\to0.
 \]
As a consequence, if $u\in \mathrm{C}^1(\mathscr{M}_c^d)$, then its linear derivative is uniquely defined.
\end{proposition}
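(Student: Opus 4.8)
The plan is to plug the specific choice $\nu = \mu + \varepsilon\delta_x$ directly into the defining formula~\eqref{def_derivative_mes} and then pass to the limit. With this choice one has $\nu - \mu = \varepsilon \delta_x$ and, for every $\lambda \in [0,1]$, $\lambda\nu + (1-\lambda)\mu = \mu + \lambda\varepsilon\delta_x$. Substituting into \eqref{def_derivative_mes} and integrating the inner integral against the Dirac mass $\varepsilon\delta_x$ yields
\[
 u(\mu + \varepsilon\delta_x) - u(\mu) = \varepsilon \int_0^1 \delta_\mu u(\mu + \lambda\varepsilon\delta_x, x)\,\dr\lambda,
\]
so that, after dividing by $\varepsilon$,
\[
 \frac{u(\mu + \varepsilon\delta_x) - u(\mu)}{\varepsilon} = \int_0^1 \delta_\mu u(\mu + \lambda\varepsilon\delta_x, x)\,\dr\lambda.
\]

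It then remains to let $\varepsilon \to 0$ inside this integral. For fixed $\lambda$, the measure $\mu + \lambda\varepsilon\delta_x$ converges weakly to $\mu$ as $\varepsilon \to 0$, so by the continuity property \textit{(i)} of Definition~\ref{def_linear_deri} the integrand converges pointwise to $\delta_\mu u(\mu, x)$. To justify the interchange of limit and integral, I would invoke dominated convergence via a compactness argument: for $\varepsilon \in [0,1]$ and $\lambda \in [0,1]$, all the measures $\mu + \lambda\varepsilon\delta_x$ have total mass at most $\mu(\mathbb{R}^d) + 1$ and support contained in $\mathrm{supp}(\mu) \cup \{x\}$, hence lie in a fixed compact set $\mathscr{K}_{A,B}$ as defined in \eqref{compact_set}. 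Since $\delta_\mu u(\cdot, x)$ is continuous, it is bounded on this compact set, which provides a constant dominating function and gives
\[
 \lim_{\varepsilon \to 0}\frac{u(\mu + \varepsilon\delta_x) - u(\mu)}{\varepsilon} = \int_0^1 \delta_\mu u(\mu, x)\,\dr\lambda = \delta_\mu u(\mu, x).
\]

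The uniqueness is then immediate: the right-hand side of the limit depends only on $u$ and not on the chosen representative $\delta_\mu u$, so the value $\delta_\mu u(\mu, x)$ is pinned down for every pair $(\mu, x)$. Because the first argument $\mu$ ranges over all of $\mathscr{M}_c^d$ and the second argument $x$ over all of $\mathbb{R}^d$, this determines the entire function $\delta_\mu u$ on its domain; thus two linear derivatives of $u$ must coincide. I expect the only genuinely delicate point to be the justification of the passage to the limit inside the integral, which is precisely where the continuity assumption \textit{(i)} and the compactness of $\mathscr{K}_{A,B}$ established in Section~\ref{sec:21} are needed; the rest is a direct manipulation of the defining identity.
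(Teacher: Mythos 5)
Your proposal is correct and follows essentially the same route as the paper: substitute $\nu = \mu + \varepsilon\delta_x$ into \eqref{def_derivative_mes} to obtain $\varepsilon\int_0^1 \delta_\mu u(\mu + \lambda\varepsilon\delta_x, x)\,\dr\lambda$, then pass to the limit by dominated convergence, using the continuity of $\delta_\mu u$ together with the fact that all the measures $\mu + \lambda\varepsilon\delta_x$ lie in a fixed compact set $\mathscr{K}_{A,B}$. The justification of the domination and the uniqueness conclusion match the paper's argument.
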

\begin{proof}
 Let $u\in \mathrm{C}^1(\mathscr{M}_c^d)$, then for any $(\mu, x)\in \mathscr{M}_c^d\times \mathbb{R}^d$, we have
 \begin{align*}
  u(\mu + \varepsilon \delta_x) - u(\mu) & =  \int_0^1 \int_{\mathbb{R}^d}\delta_\mu u (\lambda(\mu + \varepsilon \delta_x) + (1-\lambda)\mu, y)(\mu + \varepsilon \delta_x - \mu)(\dr y) \dr \lambda \\
  & =  \varepsilon \int_0^1 \delta_\mu u (\mu + \lambda\varepsilon \delta_x, x) \dr \lambda.
 \end{align*}
Using the continuity of $\mu \mapsto \delta_\mu u(\mu, x)$, we have the convergence of  $\delta_\mu u (\mu + \lambda\varepsilon \delta_x, x)$ 
to $\delta_\mu u (\mu, x)$ for any $\lambda \in [0,1]$. 
Let $R >0$ be large enough so that $\mathrm{supp}(\mu)\cup\{x\} \subset [-R, R]^d$. Then for any $\lambda, \varepsilon \in(0,1]$, 
$\mu + \lambda\varepsilon \delta_x \in \mathscr{K}_{\mu(\mathbb{R}) + 1,R}$ where we recall  that
$\mathscr{K}_{\mu(\mathbb{R}) + 1,R}$ defined by  \eqref{compact_set} 
is a compact subset of $\mathscr{M}_c^d$. As a consequence, there exists a positive constant $C$ such that for any 
$\lambda, \varepsilon \in(0,1]$, $\delta_\mu u (\mu + \lambda\varepsilon \delta_x, x) \leq C$ and we can apply the dominated convergence theorem, which completes the proof.
\end{proof}

We also have the following chain rule formula.
\begin{proposition}\label{prop:chain_rule_measure}
 Let $n \geq 1$, $u_1, \ldots, u_n$ be functions in $\mathrm{C}^1(\mathscr{M}_c^d)$, $h \in \mathrm{C}^1(\mathbb{R}^n)$ and define the function $g : \mathscr{M}_c^d \to \mathbb{R}$  as $g(\mu) = h(u_1(\mu), \cdots, u_n(\mu))$. Then $g \in \mathrm{C}^1(\mathscr{M}_c^d)$ and for any $(\mu,x) \in \mathscr{M}_c^d \times \mathbb{R}^d$, we have
 \[
  \delta_\mu g(\mu, x) = \sum_{k=1}^n\partial_{x^k}h(g_1(\mu), \cdots, g_n(\mu))\delta_\mu u_k(\mu, x).
 \]
\end{proposition}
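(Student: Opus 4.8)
The plan is to verify directly that the proposed expression
\[
 \delta_\mu g(\mu,x) = \sum_{k=1}^n \partial_{x^k} h\big(u_1(\mu),\dots,u_n(\mu)\big)\,\delta_\mu u_k(\mu,x)
\]
satisfies the three requirements of Definition~\ref{def_linear_deri}. The crux is a one-dimensional reduction along the linear interpolation $\mu_\lambda := \lambda\nu+(1-\lambda)\mu$, for $\lambda\in[0,1]$, between two fixed measures $\mu,\nu\in\mathscr{M}_c^d$.

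First I would establish the auxiliary fact that for each $u\in\mathrm{C}^1(\mathscr{M}_c^d)$ the scalar map $\lambda\mapsto u(\mu_\lambda)$ is $C^1$ on $[0,1]$, with
\[
 \frac{\dr}{\dr\lambda}\,u(\mu_\lambda) = \int_{\mathbb{R}^d}\delta_\mu u(\mu_\lambda,x)\,(\nu-\mu)(\dr x).
\]
To obtain this, I apply property \textit{(iii)} of Definition~\ref{def_linear_deri} to the pair $(\mu_s,\mu_t)$: since $\lambda\mu_t+(1-\lambda)\mu_s = \mu_{s+\lambda(t-s)}$ and $\mu_t-\mu_s=(t-s)(\nu-\mu)$, a change of variable turns the defining identity into the representation $u(\mu_t)-u(\mu_s)=\int_s^t\big(\int_{\mathbb{R}^d}\delta_\mu u(\mu_\tau,x)(\nu-\mu)(\dr x)\big)\dr\tau$. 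It then remains to check that the integrand $\tau\mapsto\int_{\mathbb{R}^d}\delta_\mu u(\mu_\tau,x)(\nu-\mu)(\dr x)$ is continuous, which follows from the joint continuity of $\delta_\mu u$ (property \textit{(i)}) together with dominated convergence: the whole segment lies in a compact set $\mathscr{K}_{A,B}$, property \textit{(ii)} supplies the dominating function, and the elementary bound $|\nu-\mu|\le\mu+\nu$ on total variations shows it is integrable against $|\nu-\mu|$. The fundamental theorem of calculus then yields both the $C^1$ regularity and the derivative formula.

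With this in hand, set $G(\lambda)=h\big(u_1(\mu_\lambda),\dots,u_n(\mu_\lambda)\big)$. Since $h\in\mathrm{C}^1(\mathbb{R}^n)$ and each $\lambda\mapsto u_k(\mu_\lambda)$ is $C^1$, the classical finite-dimensional chain rule gives $G\in C^1([0,1])$ with $G'(\lambda)=\sum_{k=1}^n\partial_{x^k}h(u_1(\mu_\lambda),\dots,u_n(\mu_\lambda))\frac{\dr}{\dr\lambda}u_k(\mu_\lambda)$. Integrating $G'$ over $[0,1]$, inserting the derivative formula above, and recognizing the candidate $\delta_\mu g$ evaluated along $\mu_\lambda$ produces exactly identity \textit{(iii)} for $g$.

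Finally I would verify properties \textit{(i)} and \textit{(ii)} for the candidate. Continuity \textit{(i)} is immediate: $\mu\mapsto\partial_{x^k}h(u_1(\mu),\dots,u_n(\mu))$ is continuous as a composition of continuous maps, each $\delta_\mu u_k$ is jointly continuous, and products of such factors are continuous on $\mathscr{M}_c^d\times\mathbb{R}^d$. For the integrability \textit{(ii)}, the map $\lambda\mapsto(u_1(\mu_\lambda),\dots,u_n(\mu_\lambda))$ is continuous, hence has compact image, so each $\partial_{x^k}h$ is bounded there by some constant $C$; consequently $\sup_{\lambda\in[0,1]}|\delta_\mu g(\mu_\lambda,x)|\le C\sum_{k=1}^n\sup_{\lambda\in[0,1]}|\delta_\mu u_k(\mu_\lambda,x)|$, and the right-hand side lies in $\mathrm{L}^1(\mu)\cap\mathrm{L}^1(\nu)$ by property \textit{(ii)} for each $u_k$. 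I expect the main obstacle to be the auxiliary $C^1$ statement, specifically securing a $\lambda$-uniform integrable domination so that dominated convergence applies, while the remaining steps are routine once that lemma is in place.
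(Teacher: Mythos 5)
Your proposal is correct and follows essentially the same route as the paper's proof: reduce to the interpolation $\mu_\lambda=\lambda\nu+(1-\lambda)\mu$, show that $\lambda\mapsto u_k(\mu_\lambda)$ is differentiable with derivative $\int_{\mathbb{R}^d}\delta_\mu u_k(\mu_\lambda,x)(\nu-\mu)(\dr x)$ (the paper computes the difference quotient directly where you pass through an integral representation and the fundamental theorem of calculus, but the underlying manipulation of property \textit{(iii)} and the continuity/domination argument are the same), then conclude by the finite-dimensional chain rule, checking \textit{(i)} and \textit{(ii)} exactly as the paper does.
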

The proof of this result can be found in Martini \cite[Prop. 2.20]{martini2023kolmogorov} but it is assumed therein that the linear derivatives are bounded as well as the derivative of $h$. Let us quickly give the additional arguments needed when boundedness is not required.
\begin{proof}
 We only treat the case $n = 1$ and let us first check that items \textit{(i)} and \textit{(ii)} from Definition~\ref{def_linear_deri} are satisfied by $(\mu,x)\mapsto h'(u(\mu))\delta_\mu u(\mu, x)$. The first item is straightforward and regarding the second, we see that for $\mu,\nu \in \mathscr{M}_c^d$ and any $x\in\mathbb{R}^d$,
 \[
  \sup_{\lambda\in[0,1]}|h'(u (\theta_\lambda))\delta_\mu u (\theta_\lambda, x)| \leq \sup_{\lambda\in[0,1]}|h'(u (\theta_\lambda))| \times \sup_{\lambda\in[0,1]}|\delta_\mu u (\theta_\lambda, x)|
 \]
where $\theta_\lambda = \lambda \mu + (1-\lambda)\nu$. Since the map $\lambda \mapsto h'(u (\theta_\lambda))$ is continuous on $[0,1]$, the first term on the right-hand-side is finite, and since $u\in\mathrm{C}^1(\mathscr{M}_c^d)$, $x\mapsto \sup_{\lambda\in[0,1]}|\delta_\mu u (\theta_\lambda, x)|$ is in $\mathrm{L}^1(\mu)\cap \mathrm{L}^1(\nu)$ by definition, it comes that item \textit{(ii)} is satisfied by $(\mu,x)\mapsto h'(u(\mu))\delta_\mu u(\mu, x)$. Finally, for the third item, we paraphrase the proof given in \cite[Prop. 2.20]{martini2023kolmogorov}. For any $\mu,\nu \in \mathscr{M}_c^d$, we have
 \[
  h(u(\nu)) - h(u(\mu)) = \int_0^1\partial_\lambda h(u(\theta_\lambda)) \dr \lambda = \int_0^1h'(u(\theta_\lambda))\partial_\lambda u(\theta_\lambda) \dr \lambda.
 \]
We show that $\partial_\lambda u(\theta_\lambda) = \int_{\mathbb{R}^d}\delta_\mu u(\theta_\lambda, x)(\nu - \mu)(\dr x)$ 
for any $\lambda \in(0,1)$ which will complete the proof. For any $\varepsilon \in (0,1-\lambda)$, we have
\begin{align*}
  \frac{u(\theta_{\lambda + \varepsilon}) - u(\theta_\lambda)}{\varepsilon} = & \int_0^1\int_{\mathbb{R}^d}\delta_\mu u(\theta_\lambda + s\varepsilon(\nu - \mu), x)(\nu - \mu)(\dr x)\dr s \\
  = & \frac{1}{\varepsilon}\int_0^\varepsilon\int_{\mathbb{R}^d}\delta_\mu u(\theta_\lambda + r(\nu - \mu), x)(\nu - \mu)(\dr x) \dr r
\end{align*}
where we remark that $s\theta_{\lambda+\varepsilon} + (1-s)\theta_\lambda = \theta_\lambda + s\varepsilon(\nu - \mu)$ 
and make the change of variable $\varepsilon s = r$. Since $u\in\mathrm{C}^1(\mathscr{M}_c^d)$, the map $r\mapsto \delta_\mu u(\theta_\lambda + r(\nu - \mu), x)$ is continuous on $[0,1]$ for any $x \in \mathbb{R}^d$. Moreover, we see that $\theta_\lambda + r(\nu - \mu) = (\lambda + r)\nu + (1 - (\lambda + r))\mu$ so that for any $\lambda \in (0,1)$, we have
\[
 x\mapsto\sup_{r\in[0,1-\lambda]}|\delta_\mu u(\theta_\lambda + r(\mu - \nu), x)| \in \mathrm{L}^1(\mu) \cap \mathrm{L}^1(\nu)
\]
from which we deduce that for any $\lambda\in(0,1)$, the map $r\mapsto \int_{\mathbb{R}^d}\delta_\mu u(\theta_\lambda + r(\mu - \nu), x)(\mu-\nu)(\dr x)$ is continuous on $[0,1-\lambda]$. Hence we can make $\epsilon\to 0$ and see that for any $\lambda\in(0,1)$, $(u(\theta_{\lambda + \varepsilon}) - u(\theta_\lambda)) / \varepsilon$ converges to $\int_{\mathbb{R}^d}\delta_\mu u(\theta_\lambda, x)(\mu - \nu)(\dr x)$.
\end{proof}

\begin{remark}
 Applying the above result with $n = 2$ and $h(x,y) = xy$, we get that $\delta_\mu[g_1g_2](\mu,x) = g_2(\mu)\delta_\mu g_1(\mu,x) + g_1(\mu) \delta_\mu g_2(\mu,x)$. Note that, if one requires the linear derivative in Definition~\ref{def_linear_deri} to be bounded, then one needs to require the derivatives of $h$ in Proposition~\ref{prop:chain_rule_measure} to be bounded, which is of course not the case for $h(x,y) = xy$. This partly motivates the more general definition of linear derivative given here.
\end{remark}
%
%
In the following proposition, we specify the chain rule (Proposition~\ref{prop:chain_rule_measure}) to
a time dependent flow of occupation measures. This result is fundamental in our work.
It generalizes  \cite[Prop. 3]{tissotdaguette2023occupied}.

\begin{proposition}\label{chain_rule_occupation}
 Let $u \in \mathrm{C}^1(\mathscr{M}_c^d)$, $\nu \in \mathscr{M}_c^d$, $n \geq 1$ and consider $n$ continuous paths $(x_t^k)_{t\geq0}$, $1 \leq k \leq n$, valued in $\mathbb{R}^d$. If we set $\mu_t = \nu + \sum_{k=1}^n \int_0^t\delta_{x_s^k} \dr s$ for any $t\geq0$, then we have
 \[
  u(\mu_t) = u(\nu) +  \sum_{k=1}^n\int_0^t \delta_\mu u (\mu_s, x_s^k) \dr s.
 \]
\end{proposition}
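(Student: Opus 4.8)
The plan is to fix a horizon $T>0$, work on $[0,T]$, and show that the scalar function $\phi(t) := u(\mu_t)$ is continuously differentiable on $[0,T]$ with $\phi'(t) = \sum_{k=1}^n \delta_\mu u(\mu_t, x_t^k)$; the claimed identity then follows at once from the fundamental theorem of calculus. First I would record the elementary facts that place everything inside the framework of Definition~\ref{def_linear_deri}. Since each path $(x_s^k)_{s\in[0,T]}$ is continuous, its range is bounded, so there exist $A,B>0$ (for instance $A = \nu(\mathbb{R}^d)+nT$ and $B$ large enough) with $\mu_t \in \mathscr{K}_{A,B}$ for all $t\in[0,T]$, and likewise every convex combination $\lambda\mu_{t+h}+(1-\lambda)\mu_t$ stays in this same compact set. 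Moreover $t\mapsto\mu_t$ is continuous for the weak topology, because for every $\varphi\in C_b(\mathbb{R}^d)$ one has $\langle\mu_t,\varphi\rangle = \langle\nu,\varphi\rangle + \sum_{k=1}^n\int_0^t\varphi(x_s^k)\dr s$, which is continuous (indeed $C^1$) in $t$.

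Next I would compute the right difference quotient. For $h>0$ the increment $\mu_{t+h}-\mu_t = \sum_{k=1}^n\int_t^{t+h}\delta_{x_s^k}\dr s$ is a positive finite measure, so applying property~(iii) of Definition~\ref{def_linear_deri} to the pair $(\mu_t,\mu_{t+h})$ gives
\[
 u(\mu_{t+h}) - u(\mu_t) = \int_0^1\int_{\mathbb{R}^d}\delta_\mu u(\lambda\mu_{t+h}+(1-\lambda)\mu_t, x)\,(\mu_{t+h}-\mu_t)(\dr x)\,\dr\lambda.
\]
By Fubini and the definition of the occupation increment, the inner integral against $\mu_{t+h}-\mu_t$ equals $\sum_{k=1}^n\int_t^{t+h}\delta_\mu u(\lambda\mu_{t+h}+(1-\lambda)\mu_t, x_s^k)\,\dr s$, the double integral being legitimate thanks to the uniform bound discussed below. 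After dividing by $h$ and changing variables $s = t+hr$, the quotient becomes
\[
 \frac{\phi(t+h)-\phi(t)}{h} = \sum_{k=1}^n\int_0^1\int_0^1 \delta_\mu u(\lambda\mu_{t+h}+(1-\lambda)\mu_t, x_{t+hr}^k)\,\dr r\,\dr\lambda.
\]

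Finally I would pass to the limit $h\to0$. For fixed $(\lambda,r)\in[0,1]^2$ the measure $\lambda\mu_{t+h}+(1-\lambda)\mu_t$ converges weakly to $\mu_t$ and $x_{t+hr}^k \to x_t^k$ by path continuity, so by the joint continuity of $\delta_\mu u$ (property~(i) of Definition~\ref{def_linear_deri}) the integrand converges to $\delta_\mu u(\mu_t, x_t^k)$. Since all the measures involved lie in the fixed compact $\mathscr{K}_{A,B}$ and, for $h$ small, all the points $x_{t+hr}^k$ lie in a fixed compact subset of $\mathbb{R}^d$, continuity of $\delta_\mu u$ on this compact product furnishes a uniform bound, and dominated convergence on $[0,1]^2$ yields the right derivative $\phi'_+(t) = \sum_{k=1}^n\delta_\mu u(\mu_t, x_t^k)$. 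The same computation over $[t-h,t]$ produces the identical left derivative, and $t\mapsto\sum_{k=1}^n\delta_\mu u(\mu_t,x_t^k)$ is continuous (again by joint continuity of $\delta_\mu u$ together with the weak continuity of $t\mapsto\mu_t$ and the continuity of the paths); hence $\phi$ is $C^1$ and integrating its derivative closes the argument. The only genuinely delicate point is this interchange of limit and integral: it rests entirely on extracting a uniform bound for $\delta_\mu u$ from its continuity over the compact set $\mathscr{K}_{A,B}$, exactly as in the uniqueness proposition proved above, and on using path continuity to keep the spatial arguments in a compact set even though the paths are merely assumed continuous.
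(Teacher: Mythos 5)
Your argument is correct. You establish the stronger pointwise statement that $\phi(t)=u(\mu_t)$ is $C^1$ with $\phi'(t)=\sum_{k=1}^n\delta_\mu u(\mu_t,x_t^k)$ and then invoke the fundamental theorem of calculus, whereas the paper never differentiates: it telescopes $u(\mu_t)-u(\nu)$ over a uniform subdivision of $[0,t]$, applies property \emph{(iii)} of Definition~\ref{def_linear_deri} to each increment, rewrites the result as $\sum_k\int_0^t y_s^{k,q}\,\dr s$ for piecewise-frozen integrands $y^{k,q}$, and sends the mesh to zero by dominated convergence. The core computation is identical in both routes --- apply the defining identity to an occupation increment, convert the integral against $\mu_{t'}-\mu_t$ into a time integral along the paths via Fubini, and extract a uniform bound for $\delta_\mu u$ from its continuity on the compact set $\mathscr{K}_{A,B}\times[-B,B]^d$ --- so the difference is purely in how the limit is organized. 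Your version yields the extra information that $t\mapsto u(\mu_t)$ is continuously differentiable, at the cost of handling both one-sided derivatives and a separate continuity check for the candidate derivative; the paper's global telescoping needs only one dominated-convergence pass and, more importantly, is the template that is reused verbatim in the proof of the It\^o formula (Proposition~\ref{general_ito}), where the increments of the second argument are not differentiable in time and a derivative-based argument would not carry over.
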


\begin{proof}
 Consider some $t > 0$ and $q \geq 1$ and the uniform subdivision \(0, t/q, \cdots, \ell t/q, \cdots, t\) of \([0,t]\). 
 We classically start by writing
 \begin{align}
  u(\mu_t) - u(\nu) = & \sum_{\ell = 0}^{q-1}\big[u(\mu_{(\ell + 1)t/q}) - u(\mu_{\ell t / q})\big] \nonumber \\
  = & \sum_{\ell = 0}^{q-1}\int_0^1\int_{\mathbb{R}^d}\delta_\mu u(\lambda\mu_{(\ell + 1)t/q} + (1-\lambda)\mu_{\ell t/q}, y)(\mu_{(\ell + 1)t/q} - \mu_{\ell t/q})(\dr y) \dr \lambda \nonumber \\
  = & \sum_{\ell = 0}^{q-1}\sum_{k=1}^n \int_0^1\int_{\ell t / q}^{(\ell + 1)t/q}\delta_\mu u(\lambda\mu_{(\ell + 1)t/q} + (1-\lambda)\mu_{\ell t/q}, x_s^k)\dr s \dr \lambda \label{proof_chain} \\
  = & \sum_{k=1}^n \int_0^t y_s^{k, q} \dr s \nonumber
 \end{align}
where for any $s\in[0, t]$ and any $k \in \{1, \cdots, n\}$, $y_s^{k, q}$ is defined as
\[
 y_s^{k, q} = \int_0^1 \sum_{\ell = 0}^{q-1}\bm{1}_{\{s \in[\ell t/ q, (\ell + 1)t/q)\}}\delta_\mu u(\lambda\mu_{(\ell + 1)t/q} + (1-\lambda)\mu_{\ell t/q}, x_s^k)\dr \lambda.
\]
For the second equality in \eqref{proof_chain}, we used the definition of the linear derivative while in the third, we used that $\mu_{(\ell + 1)t/q} - \mu_{\ell t/q} = \sum_{k=1}^n \int_{\ell t/q}^{(\ell + 1)t/q}\delta_{x_s^k} \dr s$. By the continuity of $(\mu,y) \mapsto \delta_\mu u(\mu, y)$, it is clear that for any $s\in[0, t]$, any $k \in \{1, \cdots, n\}$, and any $\lambda \in[0,1]$, we have 
\[
 \sum_{\ell = 0}^{q-1}\bm{1}_{\{s \in[\ell t/ q, (\ell + 1)t/q)\}}\delta_\mu u(\lambda\mu_{(\ell + 1)t/q} + (1-\lambda)\mu_{\ell t/q}, x_s^k) \longrightarrow \delta_\mu u(\mu_s, x_s^k) \quad \text{as }q\to\infty.
\]
Let $A = nt + \nu(\mathbb{R}^d)$, $B = \max_{k\in\{1, \cdots, n\}}\sup_{s\in[0,t]}|x_s^k|$, and recall the definition \eqref{compact_set} of the compact set $\mathscr{K}_{A,B}$.
 Since $(\mu,y) \mapsto \delta_\mu u(\mu, y)$ is continuous and since $\mathscr{K}_{A,B} \times [-B, B]^d$ is a compact subset of 
 $\mathscr{M}^d_c \times \mathbb{R}^d$, there exists a constant $C > 0$ such that for any $(\mu, y) \in \mathscr{K}_{A,B} \times [-B, B]^d$, $\delta_\mu u(\mu, y) \leq C$.
  Since for any $s\in[0, t]$,  $k \in \{1, \cdots, n\}$,  $\lambda \in[0,1]$ and  
  $\ell \in \{0, \cdots, q-1\}$, $(\lambda\mu_{(\ell + 1)t/q} + (1-\lambda)\mu_{\ell t/q}, x_s^k) \in \mathscr{K}_{A,B} \times [-B, B]^d$, 
\[
 \delta_\mu u(\lambda\mu_{(\ell + 1)t/q} + (1-\lambda)\mu_{\ell t/q}, x_s^k) \leq C \quad \text{and} \quad y_s^{k, q} \leq C.
\]
Hence, we can apply the dominated convergence theorem twice to deduce that for any $k\in\{1, \cdots, n\}$, $\int_0^t y_s^{k, q} \dr s \to \int_0^t \delta_\mu u (\mu_s, x_s^k) \dr s$, which completes the proof.
\end{proof}
\begin{remark}
	The previous proof applies to the more general setting of weighted occupation measures, that is of the form \(\mu_t = \nu + \sum_k a_k \int_0^t\delta_{x^k_s} \dr s\).
\end{remark}
\section{Stochastic calculus and occupation measures}\label{sec:3}

In this Section, we aim at deriving an It\^o type formula for occupation measures of diffusion processes. In Subsection \ref{subsec:general-ito} we prove a general It\^o formula, that we specify in Subsection \ref{subsec:ito-formula-diffusion} for diffusion processes. This allows us to obtain a Feynman-Kac formula (Proposition \ref{prop_feynman}) for a class of PDEs on \(\mathbb{R}_+ \times \mathscr{M}_c^d \times (\mathbb{R}^d)^n\). 

\subsection{A general Itô formula}\label{subsec:general-ito}
In this subsection, we consider a filtered probability space $(\Omega, \mathcal{F}, \mathbb{F}, \mathbb{P})$ supporting two $\mathbb{F}$-adapted semimartingales $(X_t)_{t\geq0}$ and $(H_t)_{t\geq0}$ respectively valued in $(\mathbb{R}^d)^n$ and $\mathbb{R}^p$ for some $d,n,p \geq 1$ and such that $X_0 = x$ and $H_0 = h$ for some $(x,h)\in (\mathbb{R}^d)^n \times \mathbb{R}^p$. The process $(X_t)_{t\geq0}$ represents a system of $n$ particles $(X_t^k)_{t\geq0}$, $k \in \{1, \cdots, n\}$, living in $\mathbb{R}^d$, where we have $X_t = (X_t^1, \cdots, X_t^n)$. Next, we introduce the occupation process $(\mu_t)_{t\geq0}$ of the particles, valued in $\mathscr{M}_c^d$, and defined as
\[
 \mu_t = \nu + \sum_{k=1}^n\int_0^t \delta_{X_s^k} \dr s
\]
where $\nu\in\mathscr{M}_c^d$. Note that for any $A\in \mathcal{B}(\mathbb{R}^d)$, we have $\mu_t(A) = \nu(A) + \sum_{k=1}^n\int_0^t \bm{1}_A(X_s^k) \dr s$. We see that $(\mu_t)_{t\geq0}$ is $\mathbb{F}$-adapted. Let us now give some definitions.
\begin{definition}
 A function $u : \mathscr{M}_c^d \times \mathbb{R}^p \to \mathbb{R}$ is said to be in $\mathrm{C}^{1,2}(\mathscr{M}_c^d \times \mathbb{R}^p)$ if the following assertions hold.
 \begin{enumerate}[label=(\roman*)]
  \item $u$ is jointly continuous for the product topology.
  \item For any $\nu \in\mathscr{M}_c^d$, the function $h \mapsto u(\nu, h)$ is twice differentiable and the functions $(\nu, h) \mapsto \nabla_h u(\nu, h)$ and $(\nu, h) \mapsto \nabla_h^2 u(\nu, h)$ are jointly continuous.
  \item For any $h\in\mathbb{R}^p$, the function $\mu \mapsto u(\mu,h)$ possesses a linear derivative which we denote $\delta_\mu u(\mu, h)(y)$. Moreover, the map $(\mu , h, y) \mapsto \delta_\mu u(\mu, h)(y)$ is jointly continuous.
 \end{enumerate}
\end{definition}
\noindent
We have the following Itô formula.
\begin{proposition}[Itô formula]\label{general_ito}
 Let $u\in\mathrm{C}^{1,2}(\mathscr{M}_c^d \times \mathbb{R}^p)$. Then almost surely for any $t\geq0$,
 \[
  u(\mu_t, H_t) = u(\nu, h) + \int_0^t \nabla_h u(\mu_s, H_s) \dr H_s + \frac{1}{2}\int_0^t \nabla_h^2 u (\mu_s, H_s) \dr \langle H \rangle_s + \sum_{k=1}^n\int_0^t \delta_\mu u(\mu_s, H_s)(X_s^k) \dr s.
 \]
\end{proposition}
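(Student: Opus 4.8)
The plan is to combine the chain rule of Proposition~\ref{chain_rule_occupation}, which governs the dependence on the measure argument, with the classical multidimensional Itô formula, which governs the dependence on the semimartingale $H$. Fix $t>0$ and take the uniform subdivision $t_\ell = \ell t/q$, $0\le \ell\le q$. Starting from the telescoping sum $u(\mu_t,H_t)-u(\nu,h)=\sum_{\ell}[u(\mu_{t_{\ell+1}},H_{t_{\ell+1}})-u(\mu_{t_\ell},H_{t_\ell})]$, I would split each increment as
\[
 \big[u(\mu_{t_{\ell+1}},H_{t_{\ell+1}})-u(\mu_{t_\ell},H_{t_{\ell+1}})\big]+\big[u(\mu_{t_\ell},H_{t_{\ell+1}})-u(\mu_{t_\ell},H_{t_\ell})\big],
\]
isolating the variation in the measure (with $H$ frozen at $t_{\ell+1}$) from the variation in $H$ (with $\mu$ frozen at $t_\ell$). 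Since $H$ and the $X^k$ are continuous, I would localize at $\tau_R=\inf\{s:|H_s|\vee\max_k|X_s^k|\ge R\}$, which confines all processes to a fixed compact set, so that $\mu_s\in\mathscr{K}_{A,B}$ for suitable $A,B$ and the continuous maps $\delta_\mu u$, $\nabla_h u$, $\nabla_h^2 u$ are bounded on the relevant range; the localization is removed by letting $R\to\infty$ at the end.

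For the first bracket, the definition of the linear derivative (Definition~\ref{def_linear_deri}) together with $\mu_{t_{\ell+1}}-\mu_{t_\ell}=\sum_k\int_{t_\ell}^{t_{\ell+1}}\delta_{X_s^k}\dr s$ gives
\[
 u(\mu_{t_{\ell+1}},H_{t_{\ell+1}})-u(\mu_{t_\ell},H_{t_{\ell+1}})=\sum_{k=1}^n\int_0^1\int_{t_\ell}^{t_{\ell+1}}\delta_\mu u\big(\lambda\mu_{t_{\ell+1}}+(1-\lambda)\mu_{t_\ell},H_{t_{\ell+1}}\big)(X_s^k)\,\dr s\,\dr\lambda.
\]
Summing over $\ell$ and letting $q\to\infty$, the joint continuity of $(\mu,h,y)\mapsto\delta_\mu u(\mu,h)(y)$ and the continuity of $s\mapsto(\mu_s,H_s)$ force pointwise convergence of the integrand to $\delta_\mu u(\mu_s,H_s)(X_s^k)$, and the uniform bound from the compactness step allows dominated convergence exactly as in Proposition~\ref{chain_rule_occupation}; this contribution converges to $\sum_k\int_0^t\delta_\mu u(\mu_s,H_s)(X_s^k)\,\dr s$.

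For the second bracket I would use a second-order Taylor expansion of $h\mapsto u(\mu_{t_\ell},h)$ between $H_{t_\ell}$ and $H_{t_{\ell+1}}$, producing a first-order term $\nabla_h u(\mu_{t_\ell},H_{t_\ell})\cdot\Delta_\ell H$, a second-order term $\tfrac12\,\Delta_\ell H^\top\nabla_h^2 u(\mu_{t_\ell},H_{t_\ell})\,\Delta_\ell H$ (with $\Delta_\ell H=H_{t_{\ell+1}}-H_{t_\ell}$), and a remainder controlled by the modulus of continuity of $\nabla_h^2 u$ on the compact range. Summing the first-order terms gives a left-point Riemann sum of the adapted, continuous integrand $s\mapsto\nabla_h u(\mu_s,H_s)$ against $\dr H$, converging in probability to $\int_0^t\nabla_h u(\mu_s,H_s)\,\dr H_s$; summing the second-order terms and invoking the standard convergence $\sum_\ell\phi(t_\ell)\,\Delta_\ell H^i\,\Delta_\ell H^j\to\int_0^t\phi_s\,\dr\langle H^i,H^j\rangle_s$ for continuous adapted $\phi$ yields $\tfrac12\int_0^t\nabla_h^2 u(\mu_s,H_s)\,\dr\langle H\rangle_s$, while the remainder vanishes in probability. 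This is the classical proof of Itô's formula, now carrying the extra frozen argument $\mu_{t_\ell}$, which is harmless by joint continuity.

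The main obstacle is the bookkeeping of the asymmetric freezing and the reconciliation of two modes of convergence. Because $\mu$ is of finite variation and $s\mapsto\mu_s$ is continuous, freezing the measure argument at $t_\ell$ rather than at $t_{\ell+1}$ (or at any interpolated point) alters the limit only through a term that vanishes—there is no bracket between $\mu$ and $H$—so the decomposition is legitimate and the mismatch creates no spurious second-order contribution; making this precise, for instance by estimating $\nabla_h u(\mu_{t_{\ell+1}},\cdot)-\nabla_h u(\mu_{t_\ell},\cdot)$ through uniform continuity on the compact range, is the delicate point. Since the measure part converges almost surely while the $H$ part converges only in probability, I would pass to a subsequence of subdivisions along which all limits hold almost surely, obtain the identity on $[0,t\wedge\tau_R]$, remove the localization by letting $R\to\infty$, and finally use pathwise continuity in $t$ to upgrade the statement from ``for each fixed $t$'' to ``almost surely for all $t$''.
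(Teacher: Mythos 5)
Your proposal is correct and follows essentially the same route as the paper: the same telescoping decomposition with $H$ frozen at $t_{\ell+1}$ for the measure increment and $\mu$ frozen at $t_\ell$ for the $H$ increment, the measure part handled by the linear derivative plus compactness/dominated convergence (the paper simply invokes Proposition~\ref{chain_rule_occupation} on each subinterval instead of re-expanding the convex combination), and the $H$ part by the classical Taylor-expansion argument. Your added localization and subsequence extraction are harmless refinements of details the paper leaves implicit.
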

The proof is quite similar to the classical proof for It\^o formula with an extra care for the measure term. 
\begin{proof}
Let $t\geq 0$ be fixed, and \(q\geq 1\). We again consider the uniform subdivision of \([0,t]\) with mesh \(t/q\). We  write
\begin{align*}
 u(\mu_t, H_t) - u(\nu, h)  = &\sum_{\ell = 1}^{q - 1}\left[u(\mu_{(\ell+1)\frac{t}{q}}, H_{(\ell+1)\frac{t}{q}}) - u(\mu_{\ell\frac{t}{q}}, H_{(\ell+1)\frac{t}{q}})\right] \\
& +  \sum_{\ell = 1}^{q - 1}\left[u(\mu_{\ell\frac{t}{q}}, H_{(\ell+1)\frac{t}{q}}) - u(\mu_{\ell\frac{t}{q}}, H_{\ell\frac{t}{q}})\right] \\
   =: &\bm{\mathrm{I}}_q + \bm{\mathrm{II}}_q.
\end{align*}
The second term $\bm{\mathrm{II}}_q$  converges to 
$\int_0^t \nabla_h u(\mu_s, H_s) \dr H_s + \frac{1}{2}\int_0^t \nabla_h^2 (\mu_s, H_s) \dr \langle H \rangle_s$ as $q\to\infty$.
 We omit the proof as it is the same proof as the one of the classical Itô formula: perform a Taylor expansion of $u$ in the variable $h$, 
 as the first variable is frozen and then let $q\to\infty$. Regarding $\bm{\mathrm{I}}_p$, we have thanks to Proposition~\ref{chain_rule_occupation} that
\[
 \bm{\mathrm{I}}_q = \sum_{k=1}^n\int_0^t Y_s^{k,q} \dr s \quad \text{where} 
 \quad Y_s^{k,q} = \sum_{\ell = 0}^{q - 1}\delta_\mu u(\mu_s, H_{(\ell+1)\frac{t}{q}})(X_s^k)\bm{1}_{ (\ell\frac{t}{q}, (\ell+1)\frac{t}{q}]}(s).
\]
Clearly, almost surely for any $s\in[0,t]$, $Y_s^{k,q} \to \delta_\mu u(\mu_s, H_s)(X_s^k)$ as $q\to\infty$. 
Moreover, if we let $X_t^* = \sup_{s\in[0,t]}|X_s|$ and $H_t^* = \sup_{s\in[0,t]}|H_s|$, then for any $s_1, s_2\in[0,t]$ and any $k\in\{1, \cdots, n\}$, we have
\[
 (\mu_{s_1}, H_{s_2}, X_{s_1}^k) \in \mathscr{K}_{nt + \nu(\mathbb{R}^d), X_t^*} \times [-H_t^*, H_t^*]^p \times [-X_t^*, X_t^*]^d 
\]
where we recall that $\mathscr{K}_{A,B} = \{\mu \in \mathscr{M}_c^d, \: \mu(\mathbb{R}^d) \leq A \text{ and }\mathrm{supp}(\mu) \subset [-B, B]^d\}$. 
Since the set $\mathscr{K}_{nt+ \nu(\mathbb{R}^d), X_t^*} \times [-H_t^*, H_t^*]^p \times [-X_t^*, X_t^*]^d$ is a compact subset 
of $\mathscr{M}_c^d\times \mathbb{R}^p \times \mathbb{R}^d$ and since $(\mu, h, x) \mapsto\delta_\mu u(\mu, h)(x)$ is jointly continuous, 
there exists a random constant $C$ such that almost surely, for any $s\in[0,t]$ and any $k\in\{1, \cdots, n\}$, $|Y_s^{k,q}| \leq C$. 
By the dominated convergence theorem, we get that almost surely $\bm{\mathrm{I}}_q$ converges to 
$\sum_{k=1}^n\int_0^t \delta_\mu u(\mu_s, H_s)(X_s^k) \dr s$ as $q\to\infty$, which establishes the result.
\end{proof}
\begin{remark}\label{remark_ito}
As usual, if some coordinates  of $(H_t)_{t\geq0}$ have finite variations, we do not need to assume that $u$ is $\mathrm{C}^2$ in these coordinates, 
we can assume that it is only $\mathrm{C}^1$ without restriction.
\end{remark}

\subsection{Related formulas for diffusion processes}\label{subsec:ito-formula-diffusion}

In this subsection, we finally consider the process $(\mu_t, X_t)_{t\geq0}$ valued in $\mathrm{E} := \mathscr{M}_c^d \times (\mathbb{R}^{d})^n$ and defined by the following equation: for \(k\in \{1,\cdots,n\}\),
\begin{equation}\label{eds_occ}
X_t^k  = x^k + \int_0^t b^k(X^k_s) \dr s + \int_0^t \sigma^k (X^k_s) \dr B^k_s\quad \text{ and }\quad  \mu_t = \nu + \sum_{k=1}^n\int_0^t \delta_{X_s^k} \dr s
\end{equation}
where $(B_t)_{t\ge 0}=(B_t^1,\cdots,B_t^n)_{t\geq0}$ is an $d\times n$-dimensional Brownian motion and $(\nu,x) \in \mathrm{E}$. 
Again, $X_t = (X_t^1, \cdots, X_t^n)$ represents the system of $n$ particles living in $\mathbb{R}^d$ and \(x = (x^1,\cdots,x^n)\in (\mathbb{R}^d)^n\). 
We assume that $b^k:\mathbb{R}^{d} \to \mathbb{R}^{d}$ and $\sigma^k:\mathbb{R}^{d} \to \mathbb{R}^{d\times d}$ 
are smooth enough to ensure
 pathwise uniqueness for the SDE satisfied by $(X_t)_{t\geq0}$. 
 We denote by $\mathcal{L}$ the generator of the full vector \(X\) defined for functions $f \in C^2((\mathbb{R}^{d})^n)$ as
 \begin{equation}\label{generator}
  \mathcal{L}f = b \cdot \nabla f + \frac{1}{2}\mathrm{tr}(\nabla^2 f \sigma \sigma^{T}),
 \end{equation}
 where $b= (b^1, \cdots, b^n)$ and $\sigma$ is the block diagonal matrix formed with the matrices $\sigma^k$, $k\in\{1, \cdots, n\}$. For a function $u:\mathbb{R}_+ \times \mathrm{E} \to \mathbb{R}$, we say that $u \in \mathrm{C}^{1,1,2}(\mathbb{R}_+ \times \mathrm{E})$ if it is differentiable in the variables 
 $t, \nu$, twice differentiable in the variable $x$ and all the derivatives are jointly continuous. We have the following formula which is a direct corollary of Proposition~\ref{general_ito} 
 and Remark~\ref{remark_ito}.
\begin{corollary}\label{ito_special_case}
 Let $u \in \mathrm{C}^{1,1,2}(\mathbb{R}_+ \times \mathrm{E})$. Then almost surely, for any $t\geq0$,
 \begin{align*}
  u(t, \mu_t, X_t) = & u(0, \nu, x) + \int_0^t\Big[\partial_t u + \mathcal{L} u\Big](s, \mu_s, X_s) \dr s \\
   & + \sum_{k=1}^n\int_0^t \delta_\mu u(s, \mu_s, X_s)(X_s^k) \dr s + \sum_{k=1}^{n}\int_0^t \nabla_{x^k} u(s, \mu_s, X_s) \cdot \sigma^k(X_s^k) \dr B_s^k.
 \end{align*}
\end{corollary}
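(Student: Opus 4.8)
The plan is to apply the general Itô formula (Proposition~\ref{general_ito}) with a suitable choice of auxiliary semimartingale $(H_t)_{t\geq0}$ and then identify each resulting term with the desired expression. The natural choice is to augment the time variable and the position variables into a single semimartingale, so I would set $H_t = (t, X_t^1, \cdots, X_t^n) \in \mathbb{R}^p$ with $p = 1 + nd$, starting from $H_0 = (0, x)$. The function to which I apply Proposition~\ref{general_ito} is then $\tilde u(\nu, (t, x)) := u(t, \nu, x)$, so that $\tilde u(\mu_t, H_t) = u(t, \mu_t, X_t)$. The first coordinate of $H_t$, namely $t$, has finite variation, and by Remark~\ref{remark_ito} we only need $C^1$ regularity in that coordinate, which matches the hypothesis $u \in \mathrm{C}^{1,1,2}(\mathbb{R}_+ \times \mathrm{E})$: differentiability once in $t$, once in $\nu$ (linear derivative), and twice in $x$, with all derivatives jointly continuous, exactly the assumptions needed to place $\tilde u$ in the appropriate $\mathrm{C}^{1,2}$ class.

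Next I would substitute the dynamics \eqref{eds_occ} into the three terms produced by the general Itô formula. For the first-order term $\int_0^t \nabla_h \tilde u \,\dr H_s$, the time coordinate contributes $\int_0^t \partial_t u \,\dr s$, while the $X^k$ coordinates contribute both a drift part and a martingale part: writing $\dr X_s^k = b^k(X_s^k)\dr s + \sigma^k(X_s^k)\dr B_s^k$, the drift parts give $\int_0^t b \cdot \nabla u \,\dr s$ and the martingale parts give $\sum_{k=1}^n \int_0^t \nabla_{x^k} u \cdot \sigma^k(X_s^k)\,\dr B_s^k$. For the second-order term $\tfrac12 \int_0^t \nabla_h^2 \tilde u \,\dr\langle H\rangle_s$, only the quadratic variation of the $X^k$ coordinates is nonzero (the time coordinate and the cross-brackets with it vanish), and since $\dr\langle X^k\rangle_s = \sigma^k(X_s^k)(\sigma^k(X_s^k))^{T}\dr s$ while distinct particles are driven by independent Brownian motions, this yields $\tfrac12 \int_0^t \mathrm{tr}(\nabla^2 u\, \sigma\sigma^{T})\,\dr s$ with $\sigma$ the block-diagonal matrix defined above. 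Combining the drift part of the first-order term with this Hessian term reconstructs exactly $\int_0^t \mathcal{L}u(s,\mu_s,X_s)\,\dr s$ with $\mathcal{L}$ as in \eqref{generator}. The measure term from Proposition~\ref{general_ito} carries over unchanged as $\sum_{k=1}^n\int_0^t \delta_\mu u(s,\mu_s,X_s)(X_s^k)\,\dr s$.

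There is essentially no hard obstacle here, since the result is advertised as a direct corollary; the only points requiring a little care are bookkeeping ones. First, one must verify that the regularity class $\mathrm{C}^{1,1,2}(\mathbb{R}_+\times\mathrm{E})$ genuinely matches the hypotheses of Proposition~\ref{general_ito} once the time variable is absorbed into $H$, invoking Remark~\ref{remark_ito} to avoid demanding a nonexistent second derivative in time. Second, one must correctly organize the quadratic-variation computation so that the block-diagonal structure of $\sigma$ and the independence of the Brownian motions $B^1,\cdots,B^n$ are reflected in $\langle H\rangle$; this is what guarantees that no spurious cross terms between different particles appear and that the trace term takes the stated form. Having matched all terms, the proof concludes by collecting the drift and Hessian contributions into $\mathcal{L}u$ and identifying the remaining pieces, so the statement follows at once from Proposition~\ref{general_ito}.
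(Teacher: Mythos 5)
Your proposal is correct and follows exactly the route the paper intends: the paper gives no separate proof, stating only that the corollary follows directly from Proposition~\ref{general_ito} and Remark~\ref{remark_ito}, which is precisely your argument of taking $H_t=(t,X_t)$, invoking the remark for the finite-variation time coordinate, and expanding the drift, quadratic-variation, and measure terms. The bookkeeping you describe is the whole content of the deduction, so nothing is missing.
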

\noindent
In the following, we will denote by $\mathbb{P}_{(\nu, x)}$ the law of the process $(\mu_t, X_t)_{t\geq0}$ solution of \eqref{eds_occ} started at $(\nu, x)\in\mathrm{E}$. We also need the following definitions.
\begin{definition}
 We say that a function 
 $h : \mathrm{E}\to \mathbb{R}$ is $\mathrm{C}_{F}(\mathrm{E})$ if the following assertions hold.
 \begin{enumerate}[label=(\roman*)]
  \item The function $h$ is twice differentiable in the variable $x$ and $\nabla_x h$ possesses a linear derivative $\delta_\mu\nabla_x h$ such that for all 
  $(\nu, x) \in \mathrm{E}$, $y\mapsto \delta_\mu\nabla_x h(\nu, x)(y)$ is differentiable.
  
  \item The function $h$ possesses a linear derivative  such that $(x,y) \mapsto \delta_\mu h(\nu, x)(y)$ is twice differentiable, and $\nu \mapsto\delta_\mu h(\nu, x)(y)$ possesses a linear derivative such that $(y,y') \mapsto\delta_{\mu\mu}^2 h(\nu, x)(y, y')$ is also twice differentiable.
  
  \item All the derivatives are bounded and jointly continuous.
 \end{enumerate}
\end{definition}
\begin{definition}
 We say that a function $h: \mathrm{E}\to \mathbb{R}$ is $\mathrm{L}(B)$ if for any $A, B, M, T > 0$, it holds that
 \[
  \mathbb{E}\Big[\sup_{(t,\nu, x) \in \mathrm{K}}|h(\nu + \theta_t^x, x + B_t)|\Big] < \infty
 \]
where $\mathrm{K} = [0,T]\times \mathscr{K}_{A, B} \times [-M, M]^{nd}$, $(B_t)_{t\geq0} = (B_t^1, \cdots, B_t^n)_{t\geq0}$ is an $nd$ dimensional Brownian motion, $x = (x^1, \cdots, x^n)$ and $\theta_t^x = \sum_{k=1}^n \int_0^t\delta_{x^k + B_s^k} \dr s$.
\end{definition}
We also introduce the following notation.
\begin{notation}
 For a function $u\in\mathrm{C}^{1,1,2}(\mathbb{R}_+\times \mathrm{E})$, we denote by $\dmu u$ the function from $\mathbb{R}_+\times \mathrm{E}$ to $\mathbb{R}$ defined for any $(t, \nu, x)$ as $\dmu u(t, \nu, x) = \sum_{k=1}^n \delta_\mu u(t, \nu, x)(x^k)$.
\end{notation}
\noindent
We have the following Feynman-Kac formula.
\begin{proposition}[Feynman-Kac formula]\label{prop_feynman}
 Assume that the coefficient $\sigma$ is bounded. Let $f \in \mathrm{C}^0(\mathrm{E})$ bounded from below, $h \in \mathrm{C}^0(\mathrm{E})$ and let $u \in \mathrm{C}^{1,1,2}(\mathbb{R}_+\times \mathrm{E})$ such that $\nabla_x u$ is bounded. If $u$ is a solution to the PDE
 \begin{equation}\label{feynman_pde}
    \left\lbrace
        \begin{aligned}
            \partial_t& u(t,\nu,x)  + f(\nu,x)u(t,\nu,x) = [\mathcal{L} u + \dmu u](t,\nu, x) & \: \: t >0, (\nu, x) \in \mathrm{E} \\
            u&(0,\nu, x) = h(\nu,x) & \: \: (\nu, x)\in\mathrm{E}
        \end{aligned}
    \right.
\end{equation}
where $\mathcal{L}$ is given by \eqref{generator}, then, for any $T\geq0$ and any $(\nu, x) \in \mathrm{E}$, we have
\begin{equation}\label{feynman_sol}
 u(T,\nu, x) = \mathbb{E}_{(\nu, x)}\Big[\exp\Big(-\int_0^Tf(\mu_s, X_s)\dr s\Big)h(\mu_T, X_T)\Big]
\end{equation}
where $(\mu_t, X_t)_{t\geq0}$ is solution of \eqref{eds_occ}.
Moreover whenever $f\in \mathrm{C}_F(\mathrm{E})$, $h \in\mathrm{C}_F(\mathrm{E})\cap \mathrm{L}(B)$, $b^k = 0$ and $\sigma^k = Id$, the function defined by \eqref{feynman_sol} is $\mathrm{C}^{1,1,2}(\mathbb{R}_+\times \mathrm{E})$ and is the unique classical solution to~\eqref{feynman_pde}.
\end{proposition}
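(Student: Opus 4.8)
The plan is to treat the two assertions separately. For the probabilistic representation \eqref{feynman_sol} I would run a martingale argument built on Corollary~\ref{ito_special_case}, and for the converse (regularity, existence and uniqueness of the classical solution) I would differentiate the expectation \eqref{feynman_sol} directly, exploiting that when $b^k = 0$ and $\sigma^k = Id$ the flow is the explicit translation $X_s = x + B_s$, $\mu_s = \nu + \theta_s^x$.

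\textbf{The representation formula.} Fix $T > 0$ and $(\nu, x) \in \mathrm{E}$, and set $E_s = \exp(-\int_0^s f(\mu_r, X_r)\dr r)$, a finite-variation process with $\dr E_s = -f(\mu_s, X_s)E_s\dr s$. I would introduce $M_s = E_s\, u(T-s, \mu_s, X_s)$ for $s \in [0,T]$ and compute its differential by applying Corollary~\ref{ito_special_case} to the time-reversed function $(s,\nu,x)\mapsto u(T-s,\nu,x)$ together with the integration-by-parts rule (no bracket term arises since $E$ has finite variation). The drift of $M$ is then $E_s[-\partial_t u + \mathcal{L}u + \dmu u - f u](T-s, \mu_s, X_s)$, which vanishes identically by the PDE \eqref{feynman_pde}. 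Hence $M$ is a local martingale with martingale part $\sum_{k}\int_0^s E_r\,\nabla_{x^k} u(T-r, \mu_r, X_r)\cdot \sigma^k(X_r^k)\dr B_r^k$. Since $\sigma$ and $\nabla_x u$ are bounded, and since $E_r$ is bounded on $[0,T]$ because $f$ is bounded from below, this integrand is bounded, so $M$ is a genuine square-integrable martingale. Equating $\mathbb{E}[M_0] = \mathbb{E}[M_T]$ and using $u(0,\cdot) = h$ yields \eqref{feynman_sol}. As a by-product, any $u\in\mathrm{C}^{1,1,2}$ solving \eqref{feynman_pde} with bounded $\nabla_x u$ is represented by \eqref{feynman_sol}, which gives uniqueness of the classical solution.

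\textbf{Regularity and existence.} Under $b^k = 0$, $\sigma^k = Id$ the solution of \eqref{eds_occ} started at $(\nu,x)$ is $X_s = x + B_s$ and $\mu_s = \nu + \theta_s^x$, so that \eqref{feynman_sol} reads
\[
 u(t,\nu, x) = \mathbb{E}\Bigl[\exp\Bigl(-\int_0^t f(\nu + \theta_s^x, x+B_s)\,\dr s\Bigr) h(\nu + \theta_t^x, x + B_t)\Bigr].
\]
Here $\nu$ enters only affinely (in both the potential and the terminal value) and $x$ only through the shift $x + B_s$, so I would differentiate under the expectation. The map $\nu\mapsto u(t,\nu,x)$ inherits a linear derivative from those of $f$ and $h$, the natural candidate being
\[
 \delta_\mu u(t,\nu,x)(y) = \mathbb{E}\Bigl[ E_t \Bigl( \delta_\mu h(\mu_t, X_t)(y) - h(\mu_t, X_t)\int_0^t \delta_\mu f(\mu_s, X_s)(y)\,\dr s \Bigr) \Bigr],
\]
which I would check against Definition~\ref{def_linear_deri}. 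The $x$-derivatives combine the direct dependence through $X_s$ (yielding $\nabla_x h$, $\nabla_x f$) with the dependence through $\theta^x$ (yielding the spatial gradients of $\delta_\mu h$, $\delta_\mu f$ evaluated along the path via Proposition~\ref{chain_rule_occupation}), while the second $x$-derivative further produces the mixed term $\delta_\mu\nabla_x h$ and the second measure derivative $\delta^2_{\mu\mu} h$ integrated against the occupation path — exactly the derivatives required to exist in the definition of $\mathrm{C}_F(\mathrm{E})$. Each interchange of differentiation and expectation, and the joint continuity of the resulting derivatives, I would justify by dominated convergence: the derivatives of $f$ and $h$ are bounded by $\mathrm{C}_F$, while the undifferentiated terminal factor is controlled by the hypothesis $h\in\mathrm{L}(B)$, which bounds $\mathbb{E}[\sup_{(t,\nu,x)\in\mathrm{K}}|h(\nu+\theta_t^x, x+B_t)|]$ on compacts $\mathrm{K}$. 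Once $u\in\mathrm{C}^{1,1,2}$ is established, I would recover the PDE from the Markov property of $(\mu_t,X_t)$: conditioning at time $\epsilon$ gives $u(t+\epsilon,\nu,x) = \mathbb{E}_{(\nu,x)}[\exp(-\int_0^\epsilon f)\,u(t,\mu_\epsilon, X_\epsilon)]$, and applying Corollary~\ref{ito_special_case} to $u(t,\cdot,\cdot)$, dividing by $\epsilon$ and letting $\epsilon\to 0$ yields $\partial_t u = \mathcal{L}u + \dmu u - f u$, i.e. \eqref{feynman_pde}, the initial condition $u(0,\cdot)=h$ being read off directly.

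\textbf{Main obstacle.} The delicate point is the regularity step: proving that the expectation \eqref{feynman_sol} is genuinely $\mathrm{C}^{1,1,2}$ — in particular that it admits a jointly continuous linear derivative in $\nu$ and two spatial derivatives passing through the occupation measure — while justifying every interchange of derivative and expectation under only the boundedness afforded by $\mathrm{C}_F$ together with the integrability $h\in\mathrm{L}(B)$. The higher measure derivatives built into the definition of $\mathrm{C}_F(\mathrm{E})$ and the chain rule of Proposition~\ref{chain_rule_occupation} are precisely what make this differentiation legitimate; the remaining integrability and continuity bookkeeping is routine.
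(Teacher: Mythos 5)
Your proposal is correct and follows essentially the same route as the paper: the representation formula is obtained by applying Corollary~\ref{ito_special_case} to $(s,\nu,x)\mapsto u(T-s,\nu,x)$, multiplying by the finite-variation exponential factor, cancelling the drift via the PDE and using boundedness of $\sigma$, $\nabla_x u$ and the lower bound on $f$ to get a true martingale; the regularity part likewise proceeds by differentiating the explicit expectation under the integral sign (the paper's Steps 1--2, including the same decoupling of the dependence through the Brownian shift and through the occupation measure) and recovering the PDE from the Markov property at time $\epsilon$ plus It\^o's formula (the paper's Step 3). The only caveat, shared with the paper, is that the uniqueness obtained this way is among classical solutions with bounded spatial gradient.
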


\begin{proof}
Let us prove the first statement. Consider some $T > 0$ fixed and, applying Corollary~\ref{ito_special_case} with the function $(t, \nu, x) \mapsto u(T-t, \nu, x)$, we get for any $t\in[0,T]$
\[
 u(T-t, \mu_t, X_t) = u(T,\nu, x) + \int_0^t\Big[-\partial_t u + \mathcal{L} u + \dmu u\Big](T -s, \mu_s, X_s) \dr s + M_t
\]
where $(M_t)_{t\geq0}$ is a local martingale (given in Corollary~\ref{ito_special_case}). Then, applying again Itô's formula to the process $\exp(-\int_0^tf(\mu_s, X_s)\dr s)u(T-t, \mu_t, X_t)$, we get
\begin{align*}
 \e^{-\int_0^tf(\mu_s, X_s)\dr s}u(T-t, \mu_t, X_t) = & u(T,\nu, x) + \int_0^t\e^{-\int_0^sf(\mu_r, X_r)\dr r}\dr M_s  \\
  &  + \int_0^t\e^{-\int_0^sf(\mu_r, X_r)\dr r}[-\partial_t u - fu + \mathcal{L} u + \dmu u](T -s, \mu_s, X_s) \dr s \\
  = & u(T,\nu, x) + \int_0^t\e^{-\int_0^sf(\mu_r, X_r)\dr r}\dr M_s
\end{align*}
where we used that $u$ is a solution to \eqref{feynman_pde}. Since $f$ is bounded from below and $\sigma$ and $\nabla_x u$ are bounded the local martingale $\int_0^t\e^{-\int_0^sf(\mu_r, X_r)\dr r}\dr M_s$ is a true martingale and we can pass to the expectation and deduce that
\[
 \mathbb{E}_{(\nu, x)}\Big[\exp\Big(-\int_0^tf(\mu_s, X_s)\dr s\Big)u(T-t, \mu_t, X_t)\Big] = u(T,\nu,x).
\]
Letting $t = T$ shows the result.

We now move to the second part of the statement and will therefore assume that $f,h \in \mathrm{C}_F(\mathrm{E})$ and that the coefficients are given by $b = 0$ and $\sigma = 1$. In this case, the function $u$ is such that for any $t\geq0$ and any $(\mu, x) \in \mathrm{E}$,
\[
 u(t,\nu, x) = \mathbb{E}\bigg[\exp\Big(-\int_0^tf(\nu + \theta_s^{x}, x + B_s)\dr s\Big)h\big(\nu + \theta_t^{x}, x + B_t\big)\bigg]
\]
where $(B_t)_{t\geq0} = ((B_t^1, \cdots, B_t^n))_{t\geq0}$ is an $nd$ dimensional Brownian motion starting at $0$ and 
$\theta_t^{x} = \sum_{k=1}^n\int_0^t\delta_{x^k + B_s^k} \dr s$. The function $u$ is well-defined since $f$ is bounded from below 
and $h \in \mathrm{L}(B)$. Let us show that it is also jointly continuous. First it is clear that a.s., 
the function $(t,\nu, x) \mapsto \e^{-\int_0^tf(\nu + \theta_s^{x}, x + B_s)\dr s}h(\nu + \theta_t^{x}, x + B_t)$ is jointly continuous. Next, we consider some fixed $(t,\nu, x) \in \mathbb{R}_+ \times \mathrm{E}$, we let $R > 0$ such that $\mathrm{supp}(\nu) \subset [-R, R]^d$ and we introduce the set 
\[
 \mathrm{K} = [0, t+1] \times \mathscr{K}_{\nu(\mathbb{R}^d), R} \times [-2|x|, 2|x|]^{nd}.
\]
Since $h \in \mathrm{L}(B)$, the quantity 
$\mathbb{E}[\sup_{(t,\nu, x) \in \mathrm{K}}|h(\nu + \theta_t^x, x + B_t)|]$ is finite and we deduce that $u$ is continuous at $(t,\nu, x)$. Since this holds for any $(t,\nu, x) \in \mathbb{R}_+ \times \mathrm{E}$, $u$ is jointly continuous.
This argument will be applied many times below to show that the various derivatives of $u$ are jointly continuous. 

\bigskip\noindent
\textit{Step 1:} we show that for any $(t,x) \in \mathbb{R}_+ \times(\mathbb{R}^d)^n $, $\nu \mapsto u(t,\nu, x)$ possesses a linear derivative $\delta_\mu u(t,\nu, x)(y)$ which is continuous in all the variables. First, we fix $(t,x)\in \mathbb{R}_+ \times (\mathbb{R}^d)^n$ and we show that a.s., $\nu \mapsto \int_0^tf(\nu + \theta_s^{x}, x + B_s)\dr s$ possesses a linear derivative. For any $\nu_1, \nu_2 \in \mathscr{M}_c^d$, we have
\begin{multline*}
 \int_0^tf(\nu_1 + \theta_s^{x}, x + B_s)\dr s - \int_0^tf(\nu_2 + \theta_s^{x}, x + B_s)\dr s \\
 = \int_0^t\int_0^1\int_{\mathbb{R}^d}\delta_\mu f(\theta_s^x + \lambda \nu_1 + (1-\lambda)\nu_2, x + B_s)(y)(\nu_1 - \nu_2)(\dr y) \dr \lambda \dr s.
\end{multline*}
Since $f \in \mathrm{C}_F(\mathrm{E})$, its linear derivative is bounded and since $\nu_1$ and $\nu_2$ have a finite mass, we can exchange the above integrals and check that a.s., 
the linear derivative of $\nu \mapsto \int_0^tf(\nu + \theta_s^{x}, x + B_s)\dr s$ is $\int_0^t\delta_\mu f(\nu + \theta_s^x, x+B_s)(y) \dr s$. 
We now define the random function $\Phi : \mathbb{R}_+ \times \mathrm{E} \to \mathbb{R}$ by
\begin{equation}
\Phi(t, \nu, x) = \exp\left(-\int_0^tf(\nu + \theta_s^{x}, x + B_s)\dr s\right)h(\nu + \theta_t^{x}, x + B_t).
\end{equation}
 We  apply Proposition~\ref{prop:chain_rule_measure} and deduce that $\Phi$ possesses a linear derivative and  for any $(\nu, y) \in \mathrm{E}$
\[
 \delta_\mu \Phi(t, \nu, x)(y) =\delta_\mu h(\nu + \theta_t^{x}, x + B_t)(y)\e^{-\int_0^tf(\nu + \theta_s^{x}, x + B_s)\dr s} -\Phi(t, \nu, x)\int_0^t\delta_\mu f(\nu + \theta_t^x, x+B_s)(y) \dr s.
\]
Reminding that $h \in \mathrm{L}(B)$, $\delta_\mu f$ and $\delta_\mu h$ are bounded and $f$ is bounded from below, we obtain  $\mathbb{E}[|\delta_\mu \Phi(t,\nu,x)(y)|]$ is finite. Finally, for any $\nu_1, \nu_2 \in \mathscr{M}_c^d$, we have 
\[
u(t,\nu_1, x) - u(t,\nu_2, x) =  \mathbb{E}\bigg[\int_0^1\int_{\mathbb{R}^d}\delta_\mu \Phi(t, \lambda \nu_1 + (1-\lambda)\nu_2, x)(y)(\nu_1 - \nu_2)(\dr y) \dr \lambda\bigg].
\]
Since $\nu_1$ and $\nu_2$ have finite mass, we can exchange the above integrals which shows that $\nu \mapsto u(t,\nu, x)$ possesses a linear derivative: 
$\delta_\mu u(t,\nu, x)(y) = \mathbb{E}[\delta_\mu \Phi(t,\nu,x)(y)]$. Since the linear derivatives of $f$ and $h$ are bounded, since $f$ is bounded from below 
 and since $h\in\mathrm{L}(B)$, this derivative is jointly continuous.

\bigskip\noindent
\textit{Step 2:} we show that for  $(t,\nu) \in \mathbb{R}_+ \times \mathscr{M}_c^d$, the function $x \mapsto u(t,\nu, x)$ is twice differentiable. 
Without loss of generality, we write the proof in the case  $d =1$ and $n\geq1$. 
We consider some $\varepsilon > 0$ and for $k\in\{1,\cdots, n\}$, we denote by $\varepsilon_k$ the vector $\varepsilon e_k$ where $e_k$ is the usual $k$th unit vector. We start by writing for any $(t,\nu, x)\in \mathbb{R}_+ \times \mathrm{E}$ that 
\begin{align*}
 \Phi(t,\nu, x+\varepsilon_k) - \Phi(t,\nu, x) = & \Phi(t,\nu, x+\varepsilon_k) - \e^{-\int_0^tf(\nu + \theta_s^{x+\varepsilon_k}, x  + B_s)\dr s}h(\nu + \theta_t^{x+\varepsilon_k}, x + B_t) \\
  & + \e^{-\int_0^tf(\nu + \theta_s^{x+\varepsilon_k}, x  + B_s)\dr s}h(\nu + \theta_t^{x+\varepsilon_k}, x + B_t) - \Phi(t,\nu,x) \\
  = & \bm{\mathrm{I}}_{\varepsilon, k}(t,\nu,x) + \bm{\mathrm{II}}_{\varepsilon, k}(t,\nu,x).
\end{align*}
We start with \( \bm{\mathrm{I}}_{\varepsilon, k} \). In order to simplify the notations, we decouple in the function $\Phi$ the initial conditions with respect to the occupation measure and to the Brownian motion.
We therefore introduce for any $(t,\nu, x, z) \in \mathbb{R}_+ \times \mathrm{E}\times (\mathbb{R}^d)^n$ the random function
\[
 \Lambda(t, \nu, x, z) := \exp(-\int_0^tf(\nu + \theta_s^{z}, x + B_s)\dr s)h(\nu + \theta_t^{z}, x + B_t)
\]
It should be clear that $\Lambda$ is differentiable in the  variable $x$ and that for any $k\in\{1,\cdots, n\}$,
\[
 \partial_{x^k} \Lambda(t, \nu, x, z) = \partial_{x^k} h(\nu + \theta_t^z, x + B_t)\e^{-\int_0^tf(\nu + \theta_s^{z}, x+ B_s)\dr s} - \Lambda(t, \nu, x, z)\int_0^t \partial_{x^k} f(\nu + \theta_s^z, x+B_s) \dr s.
\]
Since $z \mapsto \theta_s^z$ is continuous, we see that $\partial_x \Lambda$ is jointly continuous. Moreover, we have
\[
 \bm{\mathrm{I}}_{\varepsilon, k}(t,\nu,x) = \int_0^\varepsilon \partial_{x^k} \Lambda(t, \nu, x + u, x + \varepsilon) \dr u.
\]
It is clear that $\lim_{\varepsilon\to 0} \bm{\mathrm{I}}_{\varepsilon, k}(t,\nu,x) / \varepsilon  = \partial_{x^k} \Lambda(t, \nu, x, x)$. 

We proceed in a similar manner for \( 
\bm{\mathrm{II}}_{\varepsilon, k} \). 
First, we consider for any $(t,\nu, x, z)$ the random function $\ell(t, \nu, x, z) := h(\nu + \theta_t^z, x + B_t)$. Then for any $\varepsilon >0$, we have
\[
 \ell(t, \nu, x, z + \varepsilon_k) - \ell(t, \nu, x, z) = \int_0^1\int_{\mathbb{R}^d}\delta_\mu h(\nu + \lambda\theta_t^{z+\varepsilon_k} + (1-\lambda)\theta_t^{z}, x + B_t)(y)(\theta_t^{z+\varepsilon_k} - \theta_t^{z})(\dr y) \dr \lambda.
\]
Recalling that $\theta_t^{z+\varepsilon_k}(\dr y) = \int_0^t\delta_{z^k + \varepsilon + B_s^k}(\dr y) \dr s + \sum_{j\neq k}\int_0^t\delta_{z^j + B_s^j}(\dr y) \dr s$, the above quantity is equal to 
\begin{align*}
 \int_0^1 \int_0^t\delta_\mu h(\nu + &\lambda\theta_t^{z+\varepsilon_k} + (1-\lambda)\theta_t^{z}, x + B_t)(z^k + \varepsilon + B_s^k) \dr s \dr \lambda \\
  & - \int_0^1 \int_0^t\delta_\mu h(\nu + \lambda\theta_t^{z+\varepsilon} + (1-\lambda)\theta_t^{z}, x + B_t)(z^k + B_s^k) \dr s \dr \lambda.
\end{align*}
Using the joint continuity and the boundedness of $\delta_\mu h$, it should be clear that the above quantity, divided by $\varepsilon$, converges to
\[
 \int_0^t \partial_y\delta_\mu h(\nu + \theta_t^{z}, x + B_t)(z^k + B_s^k) \dr s.
\]
This shows that the function $\ell$ is differentiable in the variable $z$ and that its derivative is jointly continuous. Now we introduce $m(t,\nu, x, z) := \int_0^tf(\nu + \theta_s^z, x+ B_s) \dr s$. For any $\varepsilon > 0$, we have
\[
 m(t,\nu, x, z+ \varepsilon_k) - m(t,\nu, x, z) = \int_0^t\int_0^1\int_{\mathbb{R}^d}\delta_\mu f(\nu + \lambda\theta_s^{z+\varepsilon_k} + (1-\lambda)\theta_t^{z}, x + B_s)(y)(\theta_s^{z+\varepsilon_k} - \theta_s^{z})(\dr y) \dr \lambda
\]
which is in turn equal to
\begin{align*}
 \int_0^t\int_0^1 \int_0^s\delta_\mu f(\nu + &\lambda\theta_s^{z+\varepsilon_k} + (1-\lambda)\theta_s^{z}, x + B_s)(z^k + \varepsilon + B_u^k) \dr u \dr \lambda \dr s \\
  & - \int_0^t\int_0^1 \int_0^s\delta_\mu f(\nu + \lambda\theta_s^{z+\varepsilon_k} + (1-\lambda)\theta_s^{z}, x + B_s)(z^k + B_u^k) \dr u \dr \lambda \dr s.
\end{align*}
Again, using the joint continuity and boundedness of $\delta_\mu f$, we see that the above quantity dividing by $\varepsilon$ converges to
\[
 \int_0^t\int_0^s\partial_y\delta_\mu f(\nu + \theta_s^{z}, x + B_s)(z^k + B_u^k) \dr u \dr s
\]
as $\varepsilon\to0$. This shows that $m$ is differentiable in the variable $z$ and that its derivative is jointly continuous. All in all, this shows that the random function $\Lambda(t,\nu, x, z)$ is differentiable in the variable $z$ and that its derivative is jointly continuous in the variable $z$. Moreover, we have
\begin{align*}
 \partial_{z^k}\Lambda(t, \nu, x, z) = & \e^{-\int_0^tf(\nu + \theta_s^{z}, x+ B_s)\dr s}\int_0^t \partial_y\delta_\mu h(\nu + \theta_t^{z}, x + B_t)(z^k + B_s^k) \dr s \\
 &- \Lambda(t, \nu, x, z)\int_0^t\int_0^s\partial_y\delta_\mu f(\nu + \theta_s^{z}, x + B_s)(z^k + B_u^k) \dr u \dr s.
\end{align*}
Finally, we see that
\[
\bm{\mathrm{II}}_{\varepsilon, k}(t,\nu,x) = \int_0^\varepsilon \partial_{z^k} \Lambda(t, \nu, x, x + u) \dr u
\]
from which we deduce that $\bm{\mathrm{II}}_{\varepsilon, k}(t,\nu,x) / \varepsilon$ converges to $\partial_{z^k} \Lambda(t, \nu, x, x)$ as $\varepsilon\to0$. This shows that the function $\Phi(t,\nu, x)$ is differentiable in the variable $x$ and that for any $k\in\{1, \cdots,n\}$
\begin{align}
 \partial_{x^k} \Phi(t,\nu, x) = & \partial_{x^k} h(\nu + \theta_t^x, x + B_t)\e^{-\int_0^tf(\nu + \theta_s^{x}, x+ B_s)\dr s} - \Phi(t, \nu, x)\int_0^t \partial_{x^k} f(\nu + \theta_s^x, x+B_s) \dr s \nonumber \\
  & +\e^{-\int_0^tf(\nu + \theta_s^{x}, x+ B_s)\dr s}\int_0^t \partial_y\delta_\mu h(\nu + \theta_t^{x}, x + B_t)(x^k + B_s^k) \dr s \label{gradient_u} \\
  & - \Phi(t, \nu, x)\int_0^t\int_0^s\partial_y\delta_\mu f(\nu + \theta_s^{x}, x + B_s)(x^k + B_u^k) \dr u \dr s. \nonumber
\end{align}
Since all the derivatives of $h$ and $f$ are bounded, $f$ is bounded from below and $h \in \mathrm{L}(B)$, 
we see that $\mathbb{E}[|\partial_x \Phi(t,\nu, x)|]$ is finite and therefore $\partial_x u(t,\nu, x) = \mathbb{E}[\partial_x \Phi(t,\nu, x)]$, which is by the usual arguments continuous. Regarding the second derivative, we can repeat the above arguments to show that $u$ is indeed twice differentiable in the variable $x$. We see that in all generality, the second derivative would involve 21 terms and as the proof is already involved, we only treat the case where $f = 0$. Let us fix some $k \in \{1, \cdots, n\}$ and set $j_k(t, \nu, x) = \partial_{x^k}h(\nu + \theta_t^x, x + B_t)$, we see that we can repeat the arguments of this step to deduce that for any $i\in\{1, \cdots, n\}$
\[
 \partial_{x^i}j_k(t, \nu, x) = \partial_{x^ix^k}h(\nu + \theta_t^x, x + B_t) + \int_0^t\partial_y\delta_\mu\partial_{x^k} h(\nu + \theta_t^{x}, x + B_t)(x^i + B_s^i) \dr s.
\]
Regarding the function $p_k(t,\nu, x) = \int_0^t \partial_y\delta_\mu h(\nu + \theta_t^{x}, x + B_t)(x^k + B_s^k) \dr s$, we can also apply the above arguments and see that for any $i\in\{1, \cdots, n\}$
\begin{align*}
 \partial_{x^i}p_k(t, \nu, x) = &\bm{1}_{\{i = k\}}\int_0^t \partial_{yy}\delta_\mu h(\nu + \theta_t^{x}, x + B_t)(x^k + B_s^k) \dr s \\
  & + \int_0^t \partial_{x^i}\partial_y\delta_\mu h(\nu + \theta_t^{x}, x + B_t)(x^k + B_s^k) \dr s \\
  & + \int_0^t\int_0^s \partial_{y'}\delta_{\mu}\partial_{y}\delta_\mu h(\nu + \theta_t^{x}, x + B_t)(x^k + B_s^k, x^i + B_u^i) \dr u \dr s.
\end{align*}
Repeating the usual arguments involving the boundedness of the derivatives of $f$ and $h$ as well as the fact that $f$ is bounded from below and $h\in\mathrm{L}(B)$, we see that $u$ is twice differentiable and that $\partial_{xx} u(t, \nu, x) = \mathbb{E}[\partial_{x}j(t, \nu, x) + \partial_{x}p(t, \nu, x)] = \mathbb{E}[\partial_{xx}\Phi(t, \nu, x)]$.

\bigskip\noindent
\textit{Step 3:} Finally, for any $(\nu, x) \in \mathrm{E}$, we show that $t\mapsto u(t,\nu, x)$ is differentiable and that is satisfies \eqref{feynman_pde}. For any $\varepsilon > 0$, we have
\begin{align*}
 u(t+\varepsilon,\nu,x) - u(t,\nu,x) = & \mathbb{E}_{(\nu, x)}\Big[\e^{-\int_\varepsilon^{t+\varepsilon}f(\mu_s, X_s)\dr s}h(\mu_{t+\varepsilon}, X_{t+\varepsilon})(\e^{-\int_0^\varepsilon f(\mu_s, X_s)\dr s} - 1)\Big] \\
  & + \mathbb{E}_{(\nu,x)}\Big[\e^{-\int_\varepsilon^{t+\varepsilon}f(\mu_s, X_s)\dr s}h(\mu_{t+\varepsilon}, X_{t+\varepsilon})\Big] - u(t,\nu,x) \\
  = &: \bm{\mathrm{I}}_\varepsilon(t,\nu,x) + \bm{\mathrm{II}}_\varepsilon(t,\nu,x)
\end{align*}
Regarding the first term, it is clear that $\varepsilon^{-1}\bm{\mathrm{I}}_\varepsilon(t,\nu,x)$ converges to 
\[
 - \mathbb{E}_{(\nu, x)}\Big[\e^{-\int_0^{t}f(\mu_s, X_s)\dr s}h(\mu_{t}, X_{t})f(\mu_0, X_0)\Big] = -f(\nu, x)u(t,\nu,x).
\]
Regarding the second, we have by the strong Markov property that
\[
 \bm{\mathrm{II}}_\varepsilon(t,\nu,x) = \mathbb{E}_{(\nu,x)}\Big[u(t, \mu_\varepsilon, X_\varepsilon)\Big] - u(t,\nu, x).
\]
Remember here that $t$ is fixed. By the first steps, the function $(\gamma,z)\mapsto u(t, \gamma, z) \in \mathrm{C}^{1,2}(\mathrm{E})$ and therefore we can apply Itô's formula. For $p\geq 1$ and $\tau_p = \inf\{t \geq 0, \sup_{s\in[0,t]}|X_s| \geq p\}$, we have
\[
 u(t, \mu_{\varepsilon\wedge \tau_p}, X_{\varepsilon\wedge \tau_p}) = u(t, \nu, x) + \int_0^{\varepsilon\wedge \tau_p}[\mathcal{L}u + \dmu u](t, \mu_s, X_s) \dr s + \int_0^{\varepsilon\wedge \tau_p} \nabla_x u(t, \mu_s, X_s) \dr B_s.
\]
Let $M > 0$ such that $\mathrm{supp}(\nu) \subset [-M, M]^d$, $a = \nu(\mathbb{R}^d)$ and recall that for $A, B>0$, the set 
$\mathscr{K}_{A, B} = \{\mu \in \mathscr{M}_c^d, \: \mu(\mathbb{R}^d) \leq A \text{ and }\mathrm{supp}(\mu) \subset [-B, B]^d\}$ 
is a compact subset of $\mathscr{M}_c^d$. Since $(\gamma, z) \mapsto\nabla_x u(t,\gamma, z)$ is jointly continuous, there exists a 
positive constant $C$ such that for any $(\gamma, z) \in \mathscr{K}_{n\varepsilon + a, p + M} \times [-p, p]^{nd}$, $|\nabla_x u(t,\nu, x)| \leq C$. 
Since for any $s\in[0, \varepsilon \wedge \tau_p]$, $(\mu_s, X_s) \in \mathscr{K}_{n\varepsilon + a, p + M} \times [-p, p]^{nd}$, we conclude that 
$(\int_0^{r\wedge\varepsilon\wedge \tau_p} \nabla_x u(t, \mu_s, X_s) \dr B_s)_{r\geq0}$ is a  martingale, so that
\[
 \mathbb{E}_{(\nu,x)}\Big[u(t, \mu_{\varepsilon\wedge \tau_p}, X_{\varepsilon\wedge \tau_p})\Big] = u(t, \nu, x) + \mathbb{E}_{(\nu,x)}\Big[\int_0^{\varepsilon\wedge \tau_p}[\mathcal{L}u + \dmu u](t, \mu_s, X_s) \dr s\Big].
\]
Since $f$ is bounded from above and $h\in\mathrm{L}(B)$, it holds that
\[
 \mathbb{E}_{(\nu,x)}\Big[\sup_{p\in\mathbb{N}}|u(t, \mu_{\varepsilon\wedge \tau_p}, X_{\varepsilon\wedge \tau_p})|\Big] \leq \mathbb{E}_{(\nu,x)}\Big[\sup_{s\in[0,\varepsilon]}u(t, \mu_{s}, X_{s})\Big] < \infty
\]
Therefore, we can let $p\to\infty$ and deduce that
\[
 \bm{\mathrm{II}}_\varepsilon(t,\nu,x) = \mathbb{E}_{(\nu,x)}\Big[\int_0^{\varepsilon}[\mathcal{L}u + \dmu u](t, \mu_s, X_s) \dr s\Big].
\]
Finally, we see that $\varepsilon^{-1}\bm{\mathrm{II}}_\varepsilon(t,\nu,x)$ converges to $\mathcal{L}u(t,\nu, x) + \dmu u(t,\nu, x)$ which completes the proof.
\end{proof}

\begin{remark}
 We emphasize that the previous results, in particular Proposition \ref{prop_feynman}, does not depend on the diagonal structure of the generator $\mathcal{L}$ and could be extended to much more general coefficients $b$ and $\sigma$, possibly depending on the time, the occupation measure and the full space.
\end{remark}

\section{The control problem}\label{sec:4}

Let $(\Omega, \mathcal{F}, \mathbb{P})$ be a probability space supporting some $n\times d$-dimensional Brownian motion $(W_t)_{t\geq0}$, where $n,d \geq 1$. Let also $\mathbb{F} = (\mathcal{F}_t)_{t\geq0}$ be the filtration generated by $(W_t)_{t\geq0}$ after the usual completions. Let $\alpha = (\alpha_t)_{t\geq0}$ be some progressively measurable process \textit{w.r.t.} $\mathbb{F}$. For $(\nu, x) \in \mathrm{E}$, we introduce the process $(\mu_t^{\alpha}, X_t^{\alpha})_{t\geq0}$ valued in $\mathrm{E}$, starting at $(\nu, x)$ defined for any $t\geq0$ by
\begin{equation}\label{triplet_equation}
X_t^{k,\alpha}  = x^k + \int_0^t \alpha^k_s \dr s + W^k_t, \quad \mu_t^\alpha = \nu + \sum_{k=1}^n\int_0^t \delta_{X_s^{k,\alpha}} \dr s
\end{equation}
and $(X_t^\alpha)_{t\geq0} = (X_t^{1, \alpha}, \cdots, X_t^{n, \alpha})_{t\geq0}$. Consider now a finite horizon $T >0$ and let $\mathcal{A}$ denotes the class of progressively measurable processes. Consider also two measurable functions $f,g: \mathrm{E} \to \mathbb{R}$. Our aim is to solve, for any $(\nu, x)  \in \mathrm{E}$, the following minimization problem:
\begin{equation}\label{control_problem}
 \inf_{\alpha \in \mathcal{A}}\mathbb{E}_{(\nu, x)}\Big[g(\mu_T^\alpha, X_T^\alpha) + \int_0^Tf(\mu_s^\alpha, X_s^\alpha) \dr s + \frac{1}{2}\sum_{k=0}^n\int_0^T |\alpha^k_s|^2 \dr s\Big].
\end{equation}
\begin{remark}
In this work, we do not consider the general stochastic optimal control problem with occupation measures, in particular the associated theory of viscosity solutions and the verification theorems.
 Our specific choice of the control structure (linear / quadratic) allows us to consider HJB equations in a simple setting.
 We rely on the Boué-Dupuis formula and a Hopf-Cole transform to solve the HJB equation, to identify the value function and the optimal strategy. This gives us an explicit expression for the law of an optimal trajectory.
\end{remark}

\paragraph{The Boué-Dupuis formula.} To solve this minimization problem, we will first solve the Hamilton-Jacobi-Bellman equation associated with \eqref{control_problem} and then define a process which is controlled in an optimal way. To show that the corresponding control is indeed optimal, we will rely on the Boué-Dupuis formula introduced by Boué and Dupuis \cite{boue1998variational}, see also Budhiraja \cite{budhiraja2024some} for a recent review of the extensions of this formula.
Let us denote by $\mathcal{C}_T$ the space of continuous functions from $[0,T]$ to $(\mathbb{R}^d)^n$ endowed with the uniform distance. The formula says that for any $\mathrm{F}:\mathcal{C}_T \to \mathbb{R}$ which is bounded from above, we have
\begin{equation}\label{boue_dupuis}
 -\log\mathbb{E}\Big[\e^{-\mathrm{F}((W_t)_{t\in[0,T]})}\Big] = \inf_{\alpha \in \mathcal{A}}\mathbb{E}\Big[\mathrm{F}\Big(\big(W_t + \int_0^t \alpha_s \dr s\big)_{t\in[0,T]}\Big) + \frac{1}{2}\sum_{k=0}^n\int_0^T |\alpha^k_s|^2 \dr s\Big]
\end{equation}
This equality was later extended to functionals $\mathrm{F}$ which are not necessarily bounded from above, see for instance \cite{ustunel}. For our purposes, we will assume the following on the functions $f$ and $g$.
\begin{assumption}\label{assump_f_g}
 The function $g: \mathrm{E}\to \mathbb{R}$ is bounded from above and the function $f: \mathrm{E}\to \mathbb{R}$ is bounded.
\end{assumption}
\noindent
We can now apply \eqref{boue_dupuis} to our context. In the following, we always identify $(e_t)_{t\in[0,T]} \in \mathcal{C}_T$ as $(e_t^1, \cdots, e_t^n)_{t\in[0,T]}$. For a fixed $z = (\nu, x)\in\mathrm{E}$, we define the function $\Psi_z : \mathcal{C}_T \to \mathrm{E}$ by
\[
 \Psi_z((e_t)_{t\in[0,T]}) = g\bigg(\nu + \sum_{k=1}^n\int_0^T \delta_{(x + e_s^k)} \dr s, \: x + e_T\bigg) + \int_0^Tf\bigg(\nu + \sum_{k=1}^n\int_0^s \delta_{(x + e_u)} \dr u, \: x + e_s\bigg) \dr s
\]
which is measurable (it is even continuous). By Assumption \ref{assump_f_g}, it is bounded from below and we can therefore apply \eqref{boue_dupuis} with the function $\Psi_z$, which tells us that
\begin{equation*}
 -\log\mathbb{E}\Big[\e^{-\Psi_z((W_t)_{t\in[0,T]})}\Big]  = \inf_{\alpha \in \mathcal{A}}\mathbb{E}\Big[\Psi_z\Big((W_t +\int_0^t \alpha_s \dr s)_{t\in[0,T]}\Big)+ \frac{1}{2}\sum_{k=0}^n\int_0^T |\alpha^k_s|^2 \dr s\Big],
 \end{equation*}
 or in a more explicit way in our context,
 \begin{equation}\label{eq:boue-dupuis-value-function}
  -\log\mathbb{E}\Big[\e^{-\Psi_z((W_t)_{t\in[0,T]})}\Big]  = \inf_{\alpha \in \mathcal{A}}\mathbb{E}_{(\nu, x)}\Big[g(\mu_T^\alpha, X_T^\alpha) + \int_0^Tf(\mu_s^\alpha, X_s^\alpha) \dr s + \frac{1}{2}\sum_{k=0}^n\int_0^T |\alpha^k_s|^2 \dr s\Big].
\end{equation}
However, this formula does not give any information on a minimizing control $\alpha^*$, which will be our main objective in the rest of this note.

\paragraph{The HJB equation.} Let us now solve the HJB equation associated with \eqref{control_problem} by deriving a semi-explicit solution. As explain above, this is only possible because we place ourselves in the linear / quadratic framework. This solution is a Hopf-Cole transform of the solution to the associated Heat equation. Let us  define for any $(t, \nu,x)\in[0,T] \times \mathrm{E}$ the function
\begin{equation}\label{def_u_value}
  u(t, \nu, x) = \mathbb{E}\bigg[\exp\Big(-g(\nu + \theta_{T-t}^x, x+ B_{T-t}) -\int_0^{T-t}f(\nu + \theta_s^x, x+ B_s) \dr s\Big)\bigg]
\end{equation}
where $(B_t)_{t\geq0}$ is an $n\times d$-dimensional Brownian motion and $\theta_t^x = \sum_{k=1}^n\int_0^t \delta_{x^k + B_s^k} \dr s$. We also define the function
\begin{equation}\label{def_c_value}
  c(t, \nu, x) = -\log(u(t, \nu, x)).
\end{equation}
We have the following result.
\begin{proposition}
 Let $f\in \mathrm{C}_F(\mathrm{E})$ and $g \in\mathrm{C}_F(\mathrm{E})$ such that $\exp(-g) \in \mathrm{L}(B)$. The function $c$ defined by \eqref{def_c_value} is a $\mathrm{C}^{1,1,2}(\mathbb{R}_+\times \mathrm{E})$ function which is a solution to the following HJB equation
 \begin{equation}\label{hjb_eq}
    \left\lbrace
        \begin{aligned}
            \partial_t& c(t,\nu,x) + \Big[\frac{1}{2}\Delta_x c + \dmu c\Big](t,\nu, x) + f(\nu, x) = \frac{1}{2}|\nabla_x c|^2(t, \nu, x) & \: \: t \in[0,T), (\nu, x) \in \mathrm{E} \\
            c&(T,\nu, x) = g(\nu, x, v) &\: \: (\nu, x)\in\mathrm{E}
        \end{aligned}
    \right. .
\end{equation}
\end{proposition}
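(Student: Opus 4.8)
The plan is to read $c=-\log u$ as a Hopf--Cole transform of a linear Feynman--Kac quantity and reduce the whole statement to Proposition~\ref{prop_feynman} followed by an algebraic computation. First I would identify $u$ with a (time-reversed) Feynman--Kac solution. Taking $b^k=0$ and $\sigma^k=Id$ in \eqref{eds_occ}, under $\mathbb{P}_{(\nu,x)}$ we have $X_s=x+B_s$ and $\mu_s=\nu+\theta_s^x$, so \eqref{def_u_value} rewrites as $u(t,\nu,x)=v(T-t,\nu,x)$ with
\[
 v(s,\nu,x):=\mathbb{E}_{(\nu,x)}\Big[\exp\Big(-\int_0^s f(\mu_r,X_r)\,\dr r\Big)\,\e^{-g(\mu_s,X_s)}\Big],
\]
which is exactly the right-hand side \eqref{feynman_sol} of the Feynman--Kac formula with terminal datum $h=\e^{-g}$.

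Next I would apply the second part of Proposition~\ref{prop_feynman}. This requires $f\in\mathrm{C}_F(\mathrm{E})$ (assumed) and $h=\e^{-g}\in\mathrm{C}_F(\mathrm{E})\cap\mathrm{L}(B)$. The membership $\e^{-g}\in\mathrm{L}(B)$ is precisely the standing hypothesis $\exp(-g)\in\mathrm{L}(B)$. To handle the $\mathrm{C}_F$ requirement I would use the chain rule Proposition~\ref{prop:chain_rule_measure} (together with ordinary calculus in the $x$ variable) applied to $r\mapsto\e^{-r}$ composed with $g$: every derivative of $\e^{-g}$ equals $\e^{-g}$ times a polynomial in the bounded, jointly continuous derivatives of $g$, for instance $\nabla_x\e^{-g}=-\e^{-g}\nabla_x g$ and $\delta_\mu\e^{-g}(\nu,x)(y)=-\e^{-g(\nu,x)}\delta_\mu g(\nu,x)(y)$. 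The delicate point here --- and what I expect to be the main obstacle --- is that $g$ need not be bounded from below (think of the sausage, where $-g$ is a volume), so the factor $\e^{-g}$ is unbounded and these derivatives are not bounded in the strict sense of $\mathrm{C}_F$. This is exactly why the hypothesis is phrased as the integrability condition $\exp(-g)\in\mathrm{L}(B)$ rather than boundedness: in each place where the proof of Proposition~\ref{prop_feynman} invokes boundedness of the derivatives of $h$, the unbounded factor is precisely $\e^{-g}=h$, which is controlled \emph{in expectation} by $h\in\mathrm{L}(B)$. I would therefore re-run the dominated-convergence arguments of that proof with ``derivatives of $g$ bounded and $\e^{-g}\in\mathrm{L}(B)$'' replacing ``derivatives of $h$ bounded''. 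This yields $v\in\mathrm{C}^{1,1,2}(\mathbb{R}_+\times\mathrm{E})$ solving $\partial_s v+fv=\tfrac12\Delta_x v+\dmu v$ with $v(0,\cdot)=\e^{-g}$; via $u(t,\cdot)=v(T-t,\cdot)$ this gives $u\in\mathrm{C}^{1,1,2}$ and
\[
 \partial_t u+\tfrac12\Delta_x u+\dmu u=f\,u,\qquad u(T,\cdot)=\e^{-g}.
\]

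Then I would record that $u$ is the expectation of a strictly positive random variable, hence $u>0$ everywhere, and being continuous it is locally bounded below away from $0$. Consequently $c=-\log u$ is finite, and since $r\mapsto-\log r$ is smooth on $(0,\infty)$ the chain rule (again Proposition~\ref{prop:chain_rule_measure} in the measure variable, ordinary calculus in $t$ and $x$) shows $c\in\mathrm{C}^{1,1,2}(\mathbb{R}_+\times\mathrm{E})$: all its derivatives are rational expressions in $u$ and its derivatives with nonvanishing denominator $u$, hence jointly continuous.

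Finally comes the routine Hopf--Cole algebra. Writing $u=\e^{-c}$ one computes $\partial_t u=-\e^{-c}\partial_t c$, $\dmu u=-\e^{-c}\dmu c$, $\nabla_x u=-\e^{-c}\nabla_x c$ and $\Delta_x u=\e^{-c}\big(|\nabla_x c|^2-\Delta_x c\big)$. Substituting into $\partial_t u+\tfrac12\Delta_x u+\dmu u=f\,u$ and dividing through by $\e^{-c}>0$ yields
\[
 \partial_t c+\tfrac12\Delta_x c+\dmu c+f=\tfrac12|\nabla_x c|^2,
\]
which is exactly \eqref{hjb_eq}, while the terminal condition $c(T,\cdot)=-\log u(T,\cdot)=-\log\e^{-g}=g$ follows from $u(T,\cdot)=\e^{-g}$. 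The only genuinely nontrivial ingredient is the regularity/integrability verification described above; the transform itself is purely computational.
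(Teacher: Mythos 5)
Your proof follows exactly the paper's route: identify $u(t,\cdot)=v(T-t,\cdot)$ with the Feynman--Kac representation of Proposition~\ref{prop_feynman} for the terminal datum $h=\e^{-g}$, deduce that $u\in\mathrm{C}^{1,1,2}(\mathbb{R}_+\times\mathrm{E})$ solves the linear PDE $\partial_t u+\tfrac12\Delta_x u+\dmu u=fu$ with $u(T,\cdot)=\e^{-g}$, and then use strict positivity of $u$ together with the Hopf--Cole algebra to obtain \eqref{hjb_eq}. The one place you go beyond the paper is in flagging that $\e^{-g}$ need not literally belong to $\mathrm{C}_F(\mathrm{E})$ when $g$ is unbounded below (the paper asserts this membership via the chain rule without comment, yet in the sausage application $g_\ell$ is indeed unbounded below), and your fix --- every derivative of $h=\e^{-g}$ is $\e^{-g}$ times a bounded quantity, so the dominated-convergence steps in Proposition~\ref{prop_feynman} go through with the domination $\e^{-g}\in\mathrm{L}(B)$ in place of boundedness --- is precisely the argument needed to make that step rigorous.
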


\begin{proof}
 Since $g$ is a $\mathrm{C}_F(\mathrm{E})$ function and the exponential is smooth, the function $h(\nu,x) = \exp(-g(\nu,x))$ is also a $\mathrm{C}_F(\mathrm{E})$ function, see for instance Proposition~\ref{prop:chain_rule_measure}. Since $h \in \mathrm{L}(B)$, we can apply Proposition~\ref{prop_feynman}, which tells us that the function
 \[
  v(t,\nu, x) = \mathbb{E}\bigg[\exp\Big(-\int_0^tf(\nu + \theta_s^x, x+ B_s) \dr s-g(\nu + \theta_{t}^x, x+ B_{t})\Big)\bigg]
 \]
is a $\mathrm{C}^{1,1,2}(\mathbb{R}_+\times \mathrm{E})$ function which is a solution 
 \begin{equation*}
    \left\lbrace
        \begin{aligned}
            \partial_t& v(t,\nu,x) + f(\nu, x)v(t,\nu, x) = \Big[\frac{1}{2}\Delta_x v + \dmu v\Big](t,\nu,x)& \: \: t >0, (\nu,x)\in \mathrm{E} \\
            v&(0,\nu, x) = \exp(-g(\nu, x)) & \: \: (\nu,x)\in \mathrm{E}.
        \end{aligned}
    \right.
\end{equation*}
Of course, for any $t\in[0,T]$, we have $u(t, x, \nu) = v(T-t, x, \nu)$ and we see that $v$ is a solution to
\begin{equation*}
    \left\lbrace
        \begin{aligned}
            \partial_t& u(t,\nu,x) + \Big[\frac{1}{2}\Delta_x u + \dmu u\Big](t,\nu,x)  = f(\nu,x)u(t,\nu, x)& \: \: t\in[0,T), (\nu,x)\in \mathrm{E} \\
            u&(T,\nu, x) = \exp(-g(\nu, x)) & \: \: (\nu,x)\in \mathrm{E}.
        \end{aligned}
    \right.
\end{equation*}
Since $u$ is a $\mathrm{C}^{1,1,2}(\mathbb{R}_+\times \mathrm{E})$ function which is positive, $c$ is also a $\mathrm{C}^{1,1,2}(\mathbb{R}_+\times \mathrm{E})$ function and we have $u = \exp(-c)$ so that
\[
 \partial_t u = -u \partial_t c, \quad \dmu u = -u \dmu c \quad \text{and} \quad \Delta_x u = u(|\nabla_x|^2 c - \Delta_x c).
\]
Inserting these relations into the above PDE completes the proof.
\end{proof}
Let us emphasize that our application of the Boué-Dupuis formula given in \eqref{eq:boue-dupuis-value-function} gives us directly that for $c$ defined by \eqref{def_c_value} we have
\[
 c(0,\nu, x) = \inf_{\alpha \in \mathcal{A}}\mathbb{E}_{(\nu, x)}\Big[g(\mu_T^\alpha, X_T^\alpha) + \int_0^Tf(\mu_s^\alpha, X_s^\alpha) \dr s +
 \frac{1}{2}\sum_{k=0}^n\int_0^T |\alpha^k_s|^2 \dr s
\Big].
\]
We can now define the optimal control, and we set for any $(t, \nu, x) \in \mathbb{R}_+ \times \mathrm{E}$ and any $k\in\{1, \cdots, n\}$, 
\begin{equation}\label{eq:def-alpha-optim}
\alpha^{k,*}(t, \nu, x) = -\nabla_{x^k} c(t, \nu, x).
\end{equation}
We have the following theorem.
\begin{theorem}\label{main_thm}
Let $f,g : \mathrm{E} \to \mathbb{R}$ satisfying Assumption \ref{assump_f_g} such that $f\in \mathrm{C}_F(\mathrm{E})$, $g \in\mathrm{C}_F(\mathrm{E})$ and $\exp(-g) \in \mathrm{L}(B)$. Let also $(\nu, x) \in \mathrm{E}$ be fixed. There exists a filtered probability space $(\Omega, \mathcal{G}, (\mathcal{G}_t)_{t\in[0,T]}, \mathbb{Q})$ supporting a $(\mathcal{G}_t)_{t\in[0,T]}$-adapted process $(\mu_t^*, X_t^*)_{t\in[0,T]}$ such that for any $t\in[0,T]$ and any $k\in\{1, \cdots, n\}$,
\begin{equation}\label{optimal_process}
 X_t^{k,*}  = x + \int_0^t \alpha^{k,*}(s, \mu_s^*, X_s^*) \dr s + W_t^k, \quad \mu_t^* = \nu + \sum_{k=1}^n\int_0^t \delta_{X_s^{k,*}} \dr s
\end{equation}
where $(W_t^1)_{t\in[0,T]}, \cdots, (W_t^n)_{t\in[0,T]}$ are 
\(n\) independent
$(\mathcal{G}_t)_{t\in[0,T]}$-Brownian motions in $\mathbb{R}^d$ and $\alpha^{k,*}$ is defined in Equation \eqref{eq:def-alpha-optim}. Moreover, for any bounded and measurable functional $\mathrm{F}:\mathcal{C}_T \to \mathbb{R}$,
\begin{multline}\label{eq:uniqueness-law-optimal-trajectories}
 \mathbb{E}_{\mathbb{Q}}[\mathrm{F}((X_t^*)_{t\in[0,T]})] \\ = \mathcal{Z}^{-1}\mathbb{E}\left[\mathrm{F}((x + B_t)_{t\in[0,T]})\exp\left(-\int_0^Tf(\nu + \theta_s^x, x+ B_s) \dr s-g(\nu + \theta_{T}^x, x+ B_{T})\right)\right]
\end{multline}
where $\mathcal{Z} = \mathbb{E}[\e^{-\int_0^Tf(\nu + \theta_s^x, x+ B_s) \dr s-g(\nu + \theta_{T}^x, x+ B_{T})}]$. Finally, it holds that
\[
 c(0,\nu, x) = \mathbb{E}_{\mathbb{Q}}\Big[g(\mu_T^*, X_T^*) + \int_0^Tf(\mu_s^*, X_s^*) \dr s + \frac{1}{2}\sum_{k=1}^n\int_0^T |\alpha^{k,*}(s, \mu_s^*, X_s^*)|^2 \dr s\Big].
\]
\end{theorem}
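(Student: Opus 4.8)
The plan is to realize the optimal trajectory as a Girsanov change of measure on Wiener space, and to read off both the dynamics \eqref{optimal_process} and the value identity from the exponential martingale generated by the Hopf--Cole transform $c$. I would work on the space $(\Omega,\mathcal F,\mathbb P)$ carrying the $n\times d$-dimensional Brownian motion $B$, set $X_t=x+B_t$, let $\mu_t=\nu+\sum_{k=1}^n\int_0^t\delta_{X_s^k}\dr s$ be the associated occupation measure, and introduce the energy
\[
 G:=g(\mu_T,X_T)+\int_0^T f(\mu_s,X_s)\,\dr s,
\]
so that $\mathcal Z=\mathbb E[\e^{-G}]$. Since $g$ is bounded from above and $f$ is bounded, $G$ is bounded from above and, because $\exp(-g)\in\mathrm L(B)$, the variable $\e^{-G}$ is integrable; hence $N_t:=\mathbb E[\e^{-G}\mid\mathcal F_t]$ is automatically a uniformly integrable martingale. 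Using the Markov property of $(\mu_t,X_t)$ together with the definitions \eqref{def_u_value}--\eqref{def_c_value}, I would identify $N_t=\exp\!\big(-\int_0^t f(\mu_s,X_s)\,\dr s-c(t,\mu_t,X_t)\big)$, which also gives $N_0=u(0,\nu,x)=\mathcal Z$.

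Next I would apply the occupation-measure It\^o formula of Corollary~\ref{ito_special_case} to $c(t,\mu_t,X_t)$ (legitimate since $c\in\mathrm C^{1,1,2}(\mathbb R_+\times\mathrm E)$) with $b=0$ and $\sigma=\mathrm{Id}$, then It\^o's formula to the exponential, and cancel the finite-variation part using the HJB equation \eqref{hjb_eq}. This should yield
\[
 \dr N_t=N_t\,\alpha^{*}(t,\mu_t,X_t)\cdot\dr B_t,\qquad \alpha^{*}=-\nabla_x c,
\]
so that $N_t=\mathcal Z\exp\!\big(\int_0^t\alpha^{*}(s,\mu_s,X_s)\cdot\dr B_s-\tfrac12\int_0^t|\alpha^{*}(s,\mu_s,X_s)|^2\dr s\big)$, with $\alpha^{*}$ as in \eqref{eq:def-alpha-optim}. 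A point I would verify carefully is that $\alpha^{*}$ is bounded: since $f,g\in\mathrm C_F(\mathrm E)$ all the relevant derivatives are bounded, while $u$ is bounded below by a positive constant (because $-g-\int_0^{\cdot}f$ is bounded below), so $\nabla_x c=-\nabla_x u/u$ is bounded. This makes $N$ manifestly a true martingale and renders all the stochastic integrals below genuine martingales.

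I would then define $\mathbb Q$ by $\dr\mathbb Q/\dr\mathbb P=N_T/\mathcal Z=\mathcal Z^{-1}\e^{-G}$. By Girsanov's theorem, $W_t:=B_t-\int_0^t\alpha^{*}(s,\mu_s,X_s)\,\dr s$ is a $\mathbb Q$-Brownian motion; its $n$ components remain independent $d$-dimensional Brownian motions by L\'evy's characterization, since the quadratic covariation is unchanged by the drift. Rewriting $X_t=x+B_t$ then gives exactly \eqref{optimal_process} with $(\mathcal G_t)=(\mathcal F_t)$ and $(\mu_t^{*},X_t^{*})=(\mu_t,X_t)$. The law formula \eqref{eq:uniqueness-law-optimal-trajectories} is immediate: for bounded measurable $\mathrm F$, $\mathbb E_{\mathbb Q}[\mathrm F((X_t^{*})_{t\in[0,T]})]=\mathbb E[\mathrm F((x+B_t)_{t\in[0,T]})\,\dr\mathbb Q/\dr\mathbb P]$, which is the stated right-hand side.

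Finally, for the value identity I would compute the relative entropy $H(\mathbb Q\mid\mathbb P)$ in two ways. Substituting $\dr B=\dr W+\alpha^{*}\dr s$ into the Girsanov density gives $\log(\dr\mathbb Q/\dr\mathbb P)=\int_0^T\alpha^{*}\cdot\dr W_s+\tfrac12\int_0^T|\alpha^{*}|^2\dr s$, and since $\alpha^{*}$ is bounded the stochastic integral is a $\mathbb Q$-martingale with zero mean, whence $H(\mathbb Q\mid\mathbb P)=\tfrac12\,\mathbb E_{\mathbb Q}\big[\int_0^T|\alpha^{*}(s,\mu_s^{*},X_s^{*})|^2\dr s\big]$. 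On the other hand, the Gibbs form yields $\log(\dr\mathbb Q/\dr\mathbb P)=-\log\mathcal Z-G$, so $H(\mathbb Q\mid\mathbb P)=-\log\mathcal Z-\mathbb E_{\mathbb Q}[G]$. Equating the two and recalling $c(0,\nu,x)=-\log u(0,\nu,x)=-\log\mathcal Z$ gives
\[
 c(0,\nu,x)=\mathbb E_{\mathbb Q}\Big[g(\mu_T^{*},X_T^{*})+\int_0^T f(\mu_s^{*},X_s^{*})\,\dr s+\tfrac12\sum_{k=1}^n\int_0^T|\alpha^{k,*}(s,\mu_s^{*},X_s^{*})|^2\dr s\Big],
\]
which is the claimed value. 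I expect the main obstacle to be the martingale and integrability bookkeeping — namely securing that $N$ is a true martingale (handled by the conditional-expectation representation) and that $\alpha^{*}$ is bounded (so that Girsanov applies cleanly and the stochastic integral in the entropy computation has zero mean) — together with the careful application of the measure-dependent It\^o formula and its cancellation against the HJB equation.
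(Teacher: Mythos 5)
Your proposal follows essentially the same route as the paper: Hopf--Cole transform $c=-\log u$, the occupation-measure It\^o formula combined with the HJB equation \eqref{hjb_eq} to identify the Girsanov density, boundedness of $\alpha^{*}$ to make the change of measure and the stochastic integrals honest, and the value identity from the pathwise expression of the log-density. The cosmetic differences (defining $\mathbb{Q}$ directly as the Gibbs measure and recognizing its density as the stochastic exponential via the closed martingale $N_t=\mathbb{E}[\e^{-G}\mid\mathcal F_t]$, and phrasing the final identity as a two-way computation of the relative entropy $H(\mathbb{Q}\mid\mathbb{P})$) are equivalent to the paper's Steps 2 and 3, which instead define $\mathbb{Q}=\mathcal E(L)_T\cdot\mathbb{P}$ and take $\mathbb{Q}$-expectations in the It\^o expansion of $c(t,\mu_t^*,X_t^*)$.

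One step of your argument is not justified as written: the boundedness of $\alpha^{*}=-\nabla_x c=\nabla_x u/u$. You argue that the derivatives of $f$ and $g$ are bounded and that $u$ is bounded below by a positive constant (which is true, since $g$ is bounded above and $f$ is bounded), and conclude that $\nabla_x u/u$ is bounded. But this would require $\nabla_x u$ itself to be bounded, which does not follow: $h=\e^{-g}$ is only bounded \emph{below}, not above, so $u$ and $\nabla_x u$ may both be unbounded as functions of $(\nu,x)$. The correct mechanism, which is the content of the paper's Step 1, is the pointwise ratio bound $|\nabla_{x^k} u|\le C_T\, u$: starting from the probabilistic representation \eqref{gradient_u} of $\nabla_{x^k}u$ and using $\nabla_{x^k}h=-h\,\nabla_{x^k}g$ and $\nabla_y\delta_\mu h=-h\,\nabla_y\delta_\mu g$, every term in $\nabla_{x^k}u$ is of the form $-\mathbb{E}\bigl[G_k\,\e^{-\int_0^t f}\,h\bigr]$ with $G_k$ bounded by the (bounded) derivatives of $f$ and $g$, so $|\nabla_{x^k}u|\le C_T\,\mathbb{E}[\e^{-\int_0^t f}h]=C_T\,u$. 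With this replacement your proof goes through; the lower bound on $u$ is then not needed at all.
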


\begin{remark}
Remark that Equation \eqref{eq:uniqueness-law-optimal-trajectories} gives us directly uniqueness of the law for optimal trajectories. Using a Girsanov transform one could show uniqueness in law for the associated SDE given in Equation \eqref{optimal_process}.
\end{remark}

\begin{proof}
 \textit{Step 1:} Let us show that the coefficient $\alpha^*$ is bounded. First, we see that $\alpha^{k,*} = \nabla_{x^k} u / u$ where we recall that
 \[
  u(T-t, \nu, x) = \mathbb{E}\bigg[\exp\Big(-\int_0^tf(\nu + \theta_s^{x}, x + B_s)\dr s\Big)h\big(\nu + \theta_t^{x}, x + B_t\big)\bigg],
 \]
$h = \exp(-g)$, $(B_t)_{t\geq0}$ is a Brownian motion and $\theta_t^{x} = \sum_{k=1}^n\int_0^t\delta_{x^k + B_s^k} \dr s$. 
Let us also recall from \eqref{gradient_u} that for any $k\in\{1, \cdots, n\}$, we have
\begin{align*}
 \nabla_{x^k} u(T-t, \nu, x) = & \mathbb{E}\Big[\nabla_{x^k} h (\nu + \theta_t^{x}, x + B_t)\e^{-\int_0^tf(\nu + \theta_s^{x}, x+ B_s)\dr s}\Big] \\
  & - \mathbb{E}\Big[\int_0^t \nabla_{x^k} f(\nu + \theta_s^x, x+B_s) \dr s \times \e^{-\int_0^tf(\nu + \theta_s^{x}, x+ B_s)\dr s}h(\nu + \theta_t^{x}, x + B_t)\Big] \\
  & + \mathbb{E}\Big[\int_0^t \nabla_y\delta_\mu h(\nu + \theta_t^{x}, x + B_t)(x^k + B_s^k) \dr s \times\e^{-\int_0^tf(\nu + \theta_s^{x}, x+ B_s)\dr s} \Big] \\
  & - \mathbb{E}\Big[\int_0^t\int_0^s\nabla_y\delta_\mu f(\nu + \theta_s^{x}, x + B_s)(x^k + B_u^k) \dr u \dr s \\
   & \qquad \qquad \qquad \qquad \qquad \qquad \qquad \quad \times\e^{-\int_0^tf(\nu + \theta_s^{x}, x+ B_s)\dr s}h(\nu + \theta_t^{x}, x + B_t)\Big].
\end{align*}
Since $h= \exp(-g)$, we have $\nabla_{x^k} h = - h\nabla_{x^k} g$ and $\nabla_y\delta_\mu h = -h \nabla_y\delta_\mu g$ and as a consequence, we have for any $k\in\{1, \cdots, n\}$
\[
 \nabla_{x^k} u(T-t, \nu, x) = - \mathbb{E}\Big[G(t,\nu, x)\e^{-\int_0^tf(\nu + \theta_s^{x}, x+ B_s)\dr s}h(\nu + \theta_t^{x}, x + B_t)\Big]
\]
where 
\begin{align*}
 G(t,\nu,x) = & \nabla_{x^k} g (\nu + \theta_t^{x}, x + B_t) + \int_0^t \nabla_y\delta_\mu g(\nu + \theta_t^{x}, x + B_t)(x^k + B_s^k) \dr s \\
  & + \int_0^t \nabla_{x^k} f(\nu + \theta_s^x, x+B_s) + \int_0^t\int_0^s\nabla_y\delta_\mu f(\nu + \theta_s^{x}, x + B_s)(x^k + B_u^k) \dr u \dr s.  \\
\end{align*}
By assumption, all the derivatives of $g$ and $f$ are bounded and therefore, we see that there is a positive constant $C_T$ such that $|\nabla_{x^k} u| \leq C_T u$ on $[0,T]\times \mathrm{E}$ which shows that $\alpha^{k,*}$ is bounded on $[0,T]\times \mathrm{E}$.

\noindent
\textit{Step 2:} Consider a filtered probability space $(\Omega, \mathcal{G}, (\mathcal{G}_t)_{t\geq0}, \mathbb{P})$ supporting some
independent Brownian motions 
$(B^1_t)_{t\geq0}, \cdots, (B^n_t)_{t\geq0}$ and let $(\nu, x) \in \mathrm{E}$ be fixed. We define for any $t\in[0,T]$, the process $\mu_t^* = \nu + \sum_{k=1}^n\int_0^t \delta_{x^k + B_s^k} \dr s$. Let us define the local martingales
\[
 L_t = \sum_{k=1}^n\int_0^t\alpha^{k,*}(s, \mu_s^*, x + B_s) \dr B^k_s \quad \text{and} \quad \mathcal{E}(L)_t = \exp\Big({L_t - \frac{1}{2}\langle L \rangle_t}\Big)
\]
Since the $\alpha^{k,*}$ are bounded, the quadratic variation of $(L_t)_{t\in[0,T]}$ is bounded and we can apply  Girsanov Theorem, which tells us that the measure $\mathbb{Q} = \mathcal{E}(L)_T \cdot \mathbb{P}$ is a probability measure on $(\Omega, \mathcal{G}_T)$ and that the processes
\[
 W^k_t = B^k_t - \int_0^t\alpha^{k,*}(s, \mu_s^*, x + B_s) \dr s = B^k_t - \langle B^k, L\rangle_t
\]
are \(n\) independent $(\mathcal{G}_t)_{t\in[0,T]}$-Brownian motions under $\mathbb{Q}$. Hence, if we define $X_t^{k,*} = x^k + B^k_t$, we have
\[
 X_t^{k,*} = x^k + \int_0^t \alpha^{k,*}(s, \mu_s^*, X_s^*) \dr s + W^k_t \quad \text{and}\quad \mu_t^* = \nu + \sum_{k=1}^n\int_0^t \delta_{X_s^{k,*}} \dr s. 
\]
Girsanov Theorem also tells us that for any bounded functional $\mathrm{F}$, we have
\begin{equation}\label{girsanov_law}
 \mathbb{E}_{\mathbb{Q}}[\mathrm{F}((X_t^*)_{t\in[0,T]})] = \mathbb{E}[\mathrm{F}((x + B_t)_{t\in[0,T]})\e^{L_T - \frac{1}{2}\langle L \rangle_T}]
\end{equation}
We now apply the Itô formula to the  $\mathrm{C}^{1,1,2}(\mathbb{R}_+\times \mathrm{E})$ function $c$. Remembering that $\alpha^{k,*}= -\nabla_{x^k} c$, we see that
\begin{align*}
 c(T, \nu + \theta_T^x, x + B_T) = & c(0, \nu, x) - \sum_{k=1}^n\int_0^T\alpha^{k,*}(s, \nu + \theta_s^x, x + B_s) \dr B^k_s \\
  & + \int_0^T[\partial_t c + \frac{1}{2}\Delta_x c + \dmu c](s, \nu + \theta_s^x, x + B_s) \dr s.
\end{align*}
Using that $c$ is a solution to \eqref{hjb_eq}, the above equality gives
\[
 L_T - \frac{1}{2}\langle L \rangle_T = c(0,\nu, x) - g(\nu + \theta_T^x, x + B_T) - \int_0^Tf(\nu + \theta_s^x, x + B_s) \dr s.
\]
Inserting this equality in \eqref{girsanov_law} completes this step.

\bigskip\noindent
\textit{Step 3:} applying again the Itô formula with the $\mathrm{C}^{1,1,2}(\mathbb{R}_+\times \mathrm{E})$ function $c$, remembering that $c$ is a solution to \eqref{hjb_eq}, we see that
\begin{align*}
 c(T,\mu_T^*, X_T^*) = & c(0, \nu, x) + \int_0^T[\partial_t c + \frac{1}{2}\Delta_x c + \dmu c](s, \mu_s^*, X_s^*) \dr s - \sum_{k=1}^n\int_0^T|\alpha^{k,*}(s, \mu_s^*, X_s^*)|^2 \dr s + M_T \\
 = & c(0, \nu, x) - \frac{1}{2}\sum_{k=1}^n\int_0^T|\alpha^{k,*}(s, \mu_s^*, X_s^*)|^2 \dr s - \int_0^Tf(\mu_s^*, X_s^*) \dr s +  M_T.
\end{align*}
where $M_t = \sum_{k=1}^n\int_0^t\alpha^{k,*}(s, \mu_s^*, X_s^*) \dr W^k_s$. Since the $\alpha^{k,*}$ are bounded, $(M_t)_{t\in[0,T]}$ is a $(\mathcal{G}_t)_{t\in[0,T]}$-martingale under $\mathbb{Q}$. Since $c(T,\mu_T^*, X_T^*) = g(\mu_T^*, X_T^*)$, we see that, passing to the expectation (under $\mathbb{Q}$), we have
\[
 c(0,\nu, x) = \mathbb{E}_{\mathbb{Q}}\Big[g(\mu_T^*, X_T^*) + \int_0^Tf(\mu_s^*, X_s^*) \dr s + \frac{1}{2}\sum_{k=1}^n\int_0^T |\alpha^{k,*}(s, \mu_s^*, X_s^*)|^2 \dr s\Big],
\]
which completes the proof.
\end{proof}

\section{Application: controlling the volume of Brownian particles}\label{sec:5}

In this section, we apply our result to control Brownian particles by the volume of their sausage. For $n$ continuous paths $(x_t^k)_{t\geq0}$ valued in $\mathbb{R}^d$, $k\in\{1, \cdots, n\}$, and a radius $\rho > 0$, the $\rho$-sausage of these paths at time $t \geq0$ is the set
\[
 \bigcup_{k=1}^n\bigcup_{s\in[0,t]}\bar{B}(x_s^k, \rho)
\]
where $\bar{B}(x_s^k, \rho)$ is the closed ball of radius $\rho$, centered in $x_s^k$. When $n = 1$ and $(x_t)_{t\geq0}$ is a Brownian path, this set is the Wiener sausage and has been widely studied, see for instance the early works of Donsker and Varadhan \cite{MR397901} or Le Gall \cite{MR942751}. It is a simple example of a non-Markovian functional of the Brownian motion and plays a key role in various stochastic phenomena, see for instance Sznitman \cite{MR1717054}. 
For recent advances on the study of Wiener sausages, one can consult \cite{schapira-sausage} and the references therein. 
For the sausage of many particles, and in particular branching Brownian particles, we refer to the work of Engländer \cite{MR1767846} and \"Oz \cite{oz_branching}.

In the following, we assume without loss of generality that $\rho =1$. If we denote by $(\mu_t)_{t\geq0} = (\sum_{k=1}^n\int_0^t\delta_{x_s^k}\dr s)_{t\geq0}$ the occupation measure of the paths, volume of the sausage can be written as
\[
 m\Big(\big\{y\in\mathbb{R}^d, \: \dr(y, \mathrm{supp}(\mu_t)) \leq 1\big\}\Big)
\]
where $m$ denotes the Lebesgue measure and we recall that for a measure $\nu \in \mathscr{M}_c^d$ the support is defined as
\[
 \mathrm{supp}(\nu) = \overline{\big\{x \in \mathbb{R}^d, \: \text{for any }\epsilon > 0, \: \nu(B(x, \epsilon)) > 0\big\}}.
\]
For a measure $\nu \in \mathscr{M}_c^d$, we also use the following notation for its associated sausage:
\[
 \sausage_\nu = \{y\in\mathbb{R}^d, \: \dr(y, \mathrm{supp}(\nu)) \leq 1\}
\]
Unfortunately, the function $\nu \mapsto m(\sausage_\nu)$ from $\mathscr{M}_c^d$ to $\mathbb{R}_+$ is not regular enough for us to apply our results. We  instead use smooth approximations of this function and solve the approximated control problems. Before stating the main result of this subsection, let us introduce some notations. Let $(f_\ell)_{\ell\in\mathbb{N}}$ be a sequence of functions from $\mathbb{R}_+$ to $\mathbb{R}_+$ that satisfies the following assumptions:
\begin{assumption}\label{assump_sequence}
 The sequence $(f_\ell)_{\ell\in\mathbb{N}}$ is a sequence of decreasing functions from $\mathbb{R}_+$ to $\mathbb{R}_+$ which are twice continuously differentiable, having the following properties:
 \begin{enumerate}[label=(\roman*)]
  \item For any $x \in[0,1)$, $f_\ell(x) \to \infty$ as $\ell\to\infty$ and for any $x \in(1,\infty)$, $f_\ell(x) \to 0$ as $\ell\to\infty$. Moreover, for any $\ell\in\mathbb{N}$, $f_\ell(1) = 1$.
  
  \item There exists $R >0$ such that for any $\ell\in\mathbb{N}$ and any $x \geq R$, $f_\ell(x) = 0$.
 \end{enumerate}
\end{assumption}
For such a sequence, we define for any $\ell\in\mathbb{N}$, the family $(f_\ell^u)_{u\in\mathbb{R}^d}$ of functions from $\mathbb{R}^d$ to $\mathbb{R}_+$ defined for any $u,v\in\mathbb{R}^d$ as $f_\ell^u(v) = f_\ell(|u-v|^2)$. Finally, let us define for any $\ell\in\mathbb{N}$, the function $g_\ell : \mathscr{M}_c^d \to \mathbb{R}_+$ such that for any $\nu \in\mathscr{M}_c^d$,
\begin{equation}\label{def_approx_saus}
 g_\ell(\nu) = -\int_{\mathbb{R}^d}\big(1 - \exp(-\langle \nu, f_\ell^u \rangle)\big) \dr u.
\end{equation}
This function is well-defined since \textit{(i)} $u \mapsto 1 - \exp(-\langle \nu, f_\ell^u \rangle)$ is measurable and \textit{(ii)} $f_\ell$ has compact support so that for any $\nu \in \mathscr{M}_c^d$, there exists some $M > 0$ such that for any $u \in B(0,M)^c$, $\mathrm{supp}(\nu) \cap \mathrm{supp}(f_\ell^u) = \emptyset$, implying that for any $u \in B(0,M)^c$, $1 - \exp(-\langle \nu, f_\ell^u \rangle) = 0$. Our main result is the following:
\begin{theorem}\label{main_thm_application}
 Let $(f_\ell)_{\ell\in\mathbb{N}}$ be a sequence of functions satisfying Assumption \ref{assump_sequence}. 
 Then for any $\ell\in\mathbb{N}$, the function $g_\ell$ defined by \eqref{def_approx_saus}, seen as a function from $\mathrm{E}$ to $\mathbb{R}$, is such that $g_\ell \in \mathrm{C}_F(\mathrm{E})$ and $\exp(-g_\ell)\in \mathrm{L}(B)$. 
 Moreover, for any $\ell\in\mathbb{N}$ and any $k\in\{1, \cdots, n\}$, let $(X_t^{k, *, \ell})_{t\in[0,T]}$ be the process from Theorem~\ref{main_thm} defined by~\eqref{optimal_process} with the functions $f = 0$ and $g_\ell$ and initial conditions $x=0$ and $\nu = 0$.
  
The sequence of processes $(X_t^{*,\ell})_{t\in[0,T]} = (X_t^{1, *,  \ell}, \cdots, X_t^{n, *, \ell})_{t\in[0,T]}$ converges in law in the space of continuous functions endowed with the uniform topology as $\ell \to \infty$ to a limiting process $(X_t^{*,\infty})_{t\in[0,T]}$ which is such that for any bounded and measurable functional $\mathrm{F}$, we have
 \[
  \mathbb{E}[\mathrm{F}((X_t^{*,\infty})_{t\in[0,T]})] = \mathcal{Z}^{-1} \mathbb{E}[\mathrm{F}((B_t)_{t\in[0,T]})\e^{m(\sausage_{\theta_T})}]
 \]
where $(B_t)_{t\in[0,T]}$ is an $n\times d$-dimensional Brownian motion, $\theta_t = \sum_{k=1}^n \int_0^t \delta_{B_s^k}\dr s$ and the normalizing constant $\mathcal{Z} = \mathbb{E}[\exp(m(\sausage_{\theta_T}))]$.
\end{theorem}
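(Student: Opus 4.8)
The plan is to read off the law of each optimal trajectory from Theorem~\ref{main_thm} and then pass to the limit $\ell\to\infty$ in the resulting Gibbs weight. Since $g_\ell\le 0$ it is bounded from above and $f=0$ is bounded, so Assumption~\ref{assump_f_g} holds; granting the two regularity facts $g_\ell\in\mathrm{C}_F(\mathrm{E})$ and $\exp(-g_\ell)\in\mathrm{L}(B)$, Equation \eqref{eq:uniqueness-law-optimal-trajectories} applied with $f=0$, $\nu=0$, $x=0$ gives, for every bounded measurable $\mathrm{F}$,
\[
 \mathbb{E}[\mathrm{F}((X_t^{*,\ell})_{t\in[0,T]})] = \mathcal{Z}_\ell^{-1}\,\mathbb{E}\big[\mathrm{F}((B_t)_{t\in[0,T]})\,\e^{-g_\ell(\theta_T)}\big], \qquad \mathcal{Z}_\ell = \mathbb{E}[\e^{-g_\ell(\theta_T)}],
\]
with $\theta_T=\sum_{k}\int_0^T\delta_{B_s^k}\dr s$. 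The whole statement thus reduces to proving $\e^{-g_\ell(\theta_T)}\to\e^{m(\sausage_{\theta_T})}$ in $L^1$. For the prerequisites I would argue as follows. As $g_\ell$ does not depend on the space variable, only the measure derivatives are nontrivial; differentiating \eqref{def_approx_saus} under the integral and using Proposition~\ref{prop:chain_rule_measure} yields $\delta_\mu g_\ell(\nu)(y)=-\int \e^{-\langle\nu,f_\ell^u\rangle}f_\ell(|u-y|^2)\dr u$ and $\delta^2_{\mu\mu}g_\ell(\nu)(y,y')=\int \e^{-\langle\nu,f_\ell^u\rangle}f_\ell(|u-y|^2)f_\ell(|u-y'|^2)\dr u$, together with their $y$-derivatives; for fixed $\ell$ the function $f_\ell$ is $C^2$ and vanishes on $[R,\infty)$, hence is bounded, so all these integrands are bounded and compactly supported in $u$, giving the boundedness and joint continuity required for $\mathrm{C}_F(\mathrm{E})$. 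For $\mathrm{L}(B)$ I would use the elementary bound $-g_\ell(\nu)\le\int\langle\nu,f_\ell^u\rangle\dr u=C_\ell\,\nu(\mathbb{R}^d)$ with $C_\ell=\int f_\ell(|w|^2)\dr w<\infty$; on $\mathrm{K}=[0,T]\times\mathscr{K}_{A,B}\times[-M,M]^{nd}$ the total mass is at most $A+nT$, so $\exp(-g_\ell(\nu+\theta_t^x))$ is dominated by the deterministic constant $\exp(C_\ell(A+nT))$, whence $\exp(-g_\ell)\in\mathrm{L}(B)$ at once.

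Next I would establish the almost sure convergence $-g_\ell(\theta_T)\to m(\sausage_{\theta_T})$, arguing pointwise in $u$ for a fixed realization of $\theta_T$. If $\mathrm{dist}(u,\mathrm{supp}(\nu))=1-\delta<1$, then on a small ball around the nearest support point one has $|u-v|^2\le(1-\tfrac{\delta}{2})^2<1$, so by monotonicity $\langle\nu,f_\ell^u\rangle\ge f_\ell\big((1-\tfrac{\delta}{2})^2\big)\,\nu(\text{ball})\to\infty$ by Assumption~\ref{assump_sequence}(i), and the integrand $1-\e^{-\langle\nu,f_\ell^u\rangle}\to1$; if $\mathrm{dist}(u,\mathrm{supp}(\nu))=1+\eta>1$, then $f_\ell(|u-v|^2)\le f_\ell((1+\eta)^2)\to0$ uniformly in $v$, so the integrand $\to0$. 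By Assumption~\ref{assump_sequence}(ii) the integrand vanishes for $\mathrm{dist}(u,\mathrm{supp}(\nu))\ge\sqrt R$, a bounded region, so dominated convergence in $u$ gives $-g_\ell(\nu)\to m(\{\mathrm{dist}(u,\mathrm{supp}(\nu))<1\})$. Since the distance to a closed set is $1$-Lipschitz with $|\nabla\,\mathrm{dist}|=1$ Lebesgue-a.e.\ on $\{\mathrm{dist}>0\}$, its level set $\{\mathrm{dist}=1\}$ is null, so the limit is exactly $m(\sausage_\nu)$; applying this to the a.s.\ compactly supported random measure $\theta_T$ gives the claim.

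The crux is the domination needed to upgrade this to $L^1$ convergence, namely the exponential moment $\mathbb{E}[\exp(m(\sausage^{\sqrt R}_{\theta_T}))]<\infty$; indeed $0\le -g_\ell(\theta_T)\le m(\sausage^{\sqrt R}_{\theta_T})$ because $1-\e^{-x}\le\mathbf{1}_{\{x>0\}}$ and $f_\ell$ vanishes on $[R,\infty)$. By subadditivity of the sausage and independence of the $B^k$ it suffices to bound $\mathbb{E}[\exp(\lambda\, m(\sausage^{\rho}_{[0,T]}))]$ for a single Brownian motion, $\rho=\sqrt R$, and every $\lambda>0$. Covering $\mathbb{R}^d$ by cubes of side $\rho$, the sausage lies in the union of the dilated visited cubes, so $m(\sausage^{\rho}_{[0,T]})\le 3^d\rho^d M_\rho$ with $M_\rho$ the number of distinct $\rho$-cubes met by the path. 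Introducing the ball-exit times $\tau_{j+1}=\inf\{t>\tau_j:\ |B_t-B_{\tau_j}|\ge\rho\}$, one has $M_\rho\le c_d(1+J)$ with $J=\#\{j:\tau_j<T\}$, whose increments are i.i.d.\ copies of the exit time $\xi$ of a $\rho$-ball. A Chernoff bound for the lower renewal tail, $\mathbb{P}(J>m)=\mathbb{P}(\sum_{i\le m}\xi_i<T)\le \e^{\theta T}\,\mathbb{E}[\e^{-\theta\xi}]^m$, combined with the small-time estimate $\mathbb{E}[\e^{-\theta\xi}]\le \e^{-c\rho\sqrt{\theta}}$ for large $\theta$ (obtained from $\mathbb{P}(\xi\le s)\le C\e^{-c\rho^2/s}$ and a saddle point), optimized in $\theta$, yields the Gaussian tail $\mathbb{P}(J>m)\le \exp(-c\,m^2\rho^2/T)$. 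Hence $M_\rho$, and with it $m(\sausage^{\rho}_{[0,T]})$, has Gaussian upper tails, so all exponential moments are finite. This renewal estimate is the main technical obstacle; the crude bound $m(\sausage^\rho_{[0,T]})\le c_d(1+\sup_{s\le T}|B_s|)^d$ is too lossy for $d\ge3$, and it is precisely the path-length cost encoded by the exit-time count that restores the Gaussian tail.

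With almost sure convergence and the integrable dominating variable $\exp(m(\sausage^{\sqrt R}_{\theta_T}))$ in hand, dominated convergence gives $\mathcal{Z}_\ell\to\mathcal{Z}:=\mathbb{E}[\exp(m(\sausage_{\theta_T}))]\in(0,\infty)$ and, using $\|\mathrm{F}\|_\infty$ as the extra bound, $\mathbb{E}[\mathrm{F}((B_t))\,\e^{-g_\ell(\theta_T)}]\to\mathbb{E}[\mathrm{F}((B_t))\,\e^{m(\sausage_{\theta_T})}]$. Dividing, I obtain for every bounded continuous $\mathrm{F}$ on $\mathcal{C}_T$
\[
 \mathbb{E}[\mathrm{F}((X_t^{*,\ell})_{t\in[0,T]})]\ \longrightarrow\ \mathcal{Z}^{-1}\,\mathbb{E}\big[\mathrm{F}((B_t)_{t\in[0,T]})\,\e^{m(\sausage_{\theta_T})}\big],
\]
and the right-hand side is the expectation of $\mathrm{F}$ under the probability measure on $\mathcal{C}_T$ with Wiener density $\mathcal{Z}^{-1}\e^{m(\sausage_{\theta_T})}$. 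Convergence against all bounded continuous test functionals is exactly weak convergence in $\mathcal{C}_T$, so $(X^{*,\ell})_{t\in[0,T]}$ converges in law to the process $X^{*,\infty}$ with this Gibbs law, which is the assertion; no separate tightness argument is needed, since the explicit formula \eqref{eq:uniqueness-law-optimal-trajectories} already yields convergence of the full finite measures.
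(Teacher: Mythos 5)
Your proposal is correct, and its overall architecture coincides with the paper's: read off the Gibbs representation of the law of $(X^{*,\ell})$ from \eqref{eq:uniqueness-law-optimal-trajectories}, prove almost sure convergence of the weights $\e^{-g_\ell(\theta_T)}\to\e^{m(\sausage_{\theta_T})}$, and conclude by dominated convergence using a uniform-in-$\ell$ exponential-moment bound on the sausage volume, so that no tightness argument is needed. You diverge from the paper in how three supporting lemmas are handled, and each divergence is legitimate. First, for $\exp(-g_\ell)\in\mathrm{L}(B)$ at fixed $\ell$ you use the linear-in-mass bound $-g_\ell(\nu)\le\nu(\mathbb{R}^d)\int f_\ell(|w|^2)\dr w$, which gives a deterministic domination on $\mathrm{K}$; this is simpler than the paper's route (Proposition~\ref{functions_derivable} bounds $-g_\ell$ by sausage volumes already at this stage), though, as you correctly note, it is useless for the final limit since the constant blows up in $\ell$, so you still need the bound $-g_\ell(\theta_T)\le m(\sausage_{\theta_T,\sqrt R})$ there. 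Second, for the nullity of $\{y:\dr(y,\mathrm{supp}(\nu))=1\}$ you invoke the eikonal property $|\nabla\,\dr(\cdot,F)|=1$ a.e.\ off $F$ together with the standard fact that the gradient of a Lipschitz function vanishes a.e.\ on each of its level sets; this is a deterministic statement valid for every closed set, hence strictly more general than the paper's Proposition~\ref{null_leb}, which uses Brownian scaling and monotone convergence and only yields the result almost surely for the occupation measure (you should state the level-set/density-point lemma explicitly, but it is classical). Third, where the paper simply cites Sznitman and van den Berg--T\'oth for $\mathbb{E}[\exp(m(\sausage_{\theta_T^{0,1},R}))]<\infty$, you sketch a self-contained proof via cube counting, ball-exit renewal times and a Chernoff bound with the small-time exit estimate; the resulting Gaussian tail for the number of visited cubes is correct and does give all exponential moments, at the cost of several asserted standard estimates that would need to be written out. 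In short: same skeleton, three alternative (and in one case more general, in another more elementary) proofs of the ingredients.
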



We divide the proof of this result into several intermediate results. We first prove the following result:
\begin{proposition}\label{functions_derivable}
 Let $(f_\ell)_{\ell\in\mathbb{N}}$ be a sequence of functions satisfying Assumption \ref{assump_sequence}. For any $\ell\in\mathbb{N}$, the function $g_\ell$ defined by \eqref{def_approx_saus}, seen as a function from $\mathrm{E}$ to $\mathbb{R}$, is such that $g_\ell \in \mathrm{C}_F(\mathrm{E})$ and $\exp(-g_\ell)\in \mathrm{L}(B)$.
\end{proposition}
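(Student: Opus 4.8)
The plan is to reduce everything to differentiation under the $u$-integral in the defining formula \eqref{def_approx_saus}, exploiting two structural features. First, $g_\ell$ depends only on the measure variable $\nu$ and not on $x$, so every $x$-derivative appearing in the definition of $\mathrm{C}_F(\mathrm{E})$ vanishes identically (in particular item \textit{(i)} is trivial), and the verification of $g_\ell \in \mathrm{C}_F(\mathrm{E})$ reduces to producing the measure-derivatives together with their $y$-derivatives. Second, the measure enters only through the \emph{linear} pairing $\langle \nu, f_\ell^u \rangle$ composed with the smooth scalar map $t \mapsto 1 - \e^{-t}$, so the linear functional derivatives can be computed by the chain rule of Proposition~\ref{prop:chain_rule_measure}. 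Since $f_\ell$ is $C^2$ with compact support (Assumption~\ref{assump_sequence}), the kernel $(u,y) \mapsto f_\ell^u(y) = f_\ell(|u-y|^2)$ is $C^2$ in $y$ with bounded derivatives and, for fixed $y$, compactly supported in $u$; this is what makes all the interchanges legitimate and all the resulting derivatives bounded.

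Concretely, I would first guess, via the chain rule applied to $\nu \mapsto 1 - \e^{-\langle\nu,f_\ell^u\rangle}$ and formal differentiation under the integral, that
\[
 \delta_\mu g_\ell(\nu, y) = -\int_{\mathbb{R}^d} \e^{-\langle\nu, f_\ell^u\rangle} f_\ell(|u-y|^2) \, \dr u,
\]
and then \emph{verify} this directly against Definition~\ref{def_linear_deri}: writing $\nu_\lambda = \lambda\nu' + (1-\lambda)\nu$, one inserts the candidate into the right-hand side of \eqref{def_derivative_mes}, exchanges the $u$- and $(\lambda,y)$-integrals by Fubini (licit because $f_\ell$ is compactly supported and $\nu,\nu'$ are finite), and recognizes the inner $\lambda$-integrand as $-\tfrac{\dr}{\dr\lambda}\e^{-\langle\nu_\lambda,f_\ell^u\rangle}$, so that integrating in $\lambda$ and then in $u$ collapses the expression to $g_\ell(\nu')-g_\ell(\nu)$. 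The same scheme, applied once more to $\nu \mapsto \e^{-\langle\nu,f_\ell^u\rangle}$, yields the second linear derivative
\[
 \delta^2_{\mu\mu} g_\ell(\nu, y, y') = \int_{\mathbb{R}^d} \e^{-\langle\nu,f_\ell^u\rangle} f_\ell(|u-y|^2) f_\ell(|u-y'|^2) \, \dr u,
\]
which is manifestly symmetric in $(y,y')$. The required $y$- and $(y,y')$-derivatives are then obtained by differentiating these two formulas under the integral sign, the differentiations falling only on the smooth compactly supported factors $f_\ell(|u-\cdot|^2)$.

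For the bounds demanded by item \textit{(iii)} of the definition of $\mathrm{C}_F(\mathrm{E})$, the key point is that $\e^{-\langle\nu,f_\ell^u\rangle}\le 1$ uniformly in $\nu$, so each derivative is dominated by an integral of (derivatives of) $f_\ell(|u-y|^2)$ over the fixed ball $\{|u-y|\le\sqrt{R}\}$; since $f_\ell,f_\ell',f_\ell''$ are bounded and the ball has fixed finite volume, all derivatives are bounded uniformly in $(\nu,y,y')$. Joint continuity follows in each case from dominated convergence, using that $\nu\mapsto\langle\nu,f_\ell^u\rangle$ is weakly continuous (as $f_\ell^u\in C_b(\mathbb{R}^d)$) together with the same compactly supported domination. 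This establishes $g_\ell\in\mathrm{C}_F(\mathrm{E})$.

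It remains to check $\exp(-g_\ell)\in\mathrm{L}(B)$, and here the useful observation is the elementary bound $1-\e^{-t}\le t$ for $t\ge 0$. Applied inside \eqref{def_approx_saus} and combined with Fubini and the translation-invariant identity $\int_{\mathbb{R}^d} f_\ell(|u-v|^2)\,\dr u = \int_{\mathbb{R}^d} f_\ell(|w|^2)\,\dr w =: c_\ell$, it gives $-g_\ell(\eta)\le c_\ell\,\eta(\mathbb{R}^d)$ for every $\eta\in\mathscr{M}_c^d$. Taking $\eta = \nu+\theta_t^x$ with $(t,\nu,x)\in\mathrm{K}=[0,T]\times\mathscr{K}_{A,B}\times[-M,M]^{nd}$ and using $\eta(\mathbb{R}^d)=\nu(\mathbb{R}^d)+nt\le A+nT$ yields the \emph{deterministic} bound $\exp(-g_\ell(\eta))\le\exp(c_\ell(A+nT))$, whence the expectation in the definition of $\mathrm{L}(B)$ is trivially finite. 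The only genuine obstacle is therefore organisational—keeping track of the several derivatives required by $\mathrm{C}_F(\mathrm{E})$ and justifying each interchange of $\int\dr u$ with $\delta_\mu$ and with $\nabla_y$—rather than analytic, since the smoothness and compact support of $f_\ell$ trivialise every integrability issue.
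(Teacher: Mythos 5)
Your proof is correct. The verification that $g_\ell\in\mathrm{C}_F(\mathrm{E})$ follows essentially the same route as the paper: both compute $\delta_\mu g_\ell(\nu)(y)=-\int_{\mathbb{R}^d}\e^{-\langle\nu,f_\ell^u\rangle}f_\ell^u(y)\,\dr u$ and $\delta^2_{\mu\mu}g_\ell(\nu)(y,y')=\int_{\mathbb{R}^d}\e^{-\langle\nu,f_\ell^u\rangle}f_\ell^u(y)f_\ell^u(y')\,\dr u$ by differentiating under the $u$-integral (the paper invokes the cylindrical-function example of Martini where you verify the candidate directly against Definition~\ref{def_linear_deri}, a cosmetic difference), and both obtain boundedness of all derivatives from $\e^{-\langle\nu,f_\ell^u\rangle}\le 1$ plus the compact support of $f_\ell$, $f_\ell'$, $f_\ell''$. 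Where you genuinely diverge is the proof that $\exp(-g_\ell)\in\mathrm{L}(B)$. The paper bounds $1-\e^{-\langle\nu+\theta_t^x,f_\ell^u\rangle}$ by the indicator of an $R$-sausage, deduces $-g_\ell(\nu+\theta_t^x)\le m(\sausage_{\nu,R})+\sum_k m(\sausage_{\theta_t^{x,k},R})$, and then invokes the finiteness of exponential moments of the Wiener sausage volume (Sznitman, van den Berg--T\'oth). Your argument via $1-\e^{-t}\le t$ and Fubini gives the \emph{deterministic} bound $-g_\ell(\eta)\le c_\ell\,\eta(\mathbb{R}^d)\le c_\ell(A+nT)$, which makes the expectation trivially finite without any probabilistic input; this is simpler and fully sufficient for the proposition as stated, since $c_\ell=\int f_\ell(|w|^2)\,\dr w<\infty$ for each fixed $\ell$. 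The trade-off is that your bound degenerates as $\ell\to\infty$ (because $f_\ell\to\infty$ on $[0,1)$ forces $c_\ell\to\infty$), whereas the paper's bound \eqref{maj_vol} is uniform in $\ell$ and is reused later, in the proof of Theorem~\ref{main_thm_application}, to dominate $\e^{-g_\ell(\theta_T)}$ uniformly and pass to the limit. So your route proves this proposition more cheaply but would not by itself supply the domination needed for the convergence result downstream.
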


\begin{proof}
 Throughout the proof, we fix some $\ell\in\mathbb{N}$. Let us first show that $g_\ell$ possesses a linear derivative. We first note that for any $u \in\mathbb{R}^d$, the function $\nu \mapsto 1-\exp(-\langle \nu, f_\ell^u \rangle)$ is a cylindrical function and therefore it possesses a linear derivative which is given by $(\nu, y) \mapsto \exp(-\langle \nu, f_\ell^u \rangle)f_\ell^u(y)$, see for instance Martini \cite[Example 2.11]{martini2023kolmogorov}. Hence for any $\mu,\nu \in \mathscr{M}_c^d$, we have
 \[
  g_\ell(\nu) - g_\ell(\mu) =-\int_{\mathbb{R}^d} \int_0^1\int_{\mathbb{R}^d}\exp(-\lambda\langle \nu, f_\ell^u \rangle - (1-\lambda)\langle \mu, f_\ell^u \rangle)f_\ell^u(y)(\nu - \mu)(\dr y) \dr \lambda \dr u.
 \]
Since $\mu$, $\nu$ and $f_\ell$ have compact support, we can exchange the above integrals which implies that $g$ has a linear derivative and that
\[
 \delta_\mu g_\ell(\nu)(y) = -\int_{\mathbb{R}^d}\exp(-\langle \nu, f_\ell^u \rangle)f_\ell^u(y) \dr u.
\]
Moreover, we see that for any $\nu,y\in\mathrm{E}$,  $|\delta_\mu g(\nu)(y)| \leq \int_{\mathbb{R}^d}f_\ell(|u-y|^2) \dr u = \int_{\mathbb{R}^d}f_\ell(|z|^2) \dr z$ and therefore, it is bounded.
Since $f_\ell'$ and $f_\ell''$ have compact supports, the map $y \mapsto \delta_\mu g_\ell(\nu)(y)$ is twice differentiable and we have
\[
 \nabla_y\delta_\mu g_\ell(\nu)(y) =-2 \int_{\mathbb{R}^d}(y-u)\exp(-\langle \nu, f_\ell^u \rangle)f_\ell'(|y-u|^2) \dr u
\]
and
\begin{align*}
 \nabla_y^2 \delta_\mu g_\ell(\nu)(y) = & -2\Big[\int_{\mathbb{R}^d}\exp(-\langle \nu, f_\ell^u \rangle)f_\ell'(|y-u|^2) \dr u\Big] I_d \\
  & - 4\int_{\mathbb{R}^d}(y-u)(y-u)^{T}\exp(-\langle \nu, f_\ell^u \rangle)f_\ell''(|y-u|^2) \dr u,
\end{align*}
where $I_d$ is the $d$-dimensional identity matrix. Again, for any $\nu,y\in\mathrm{E}$, we see that we have
\[
 |\nabla_y\delta_\mu g_\ell(\nu)(y)| \leq 2 \int_{\mathbb{R}^d}|y-x||f_\ell'(|y-x|^2)| \dr x \leq 2 \int_{\mathbb{R}^d}|z||f_\ell'(|z|^2)| \dr x
\]
and
\begin{align*}
 |\nabla_y^2\delta_\mu g_\ell(\nu)(y)| & \leq 2 \int_{\mathbb{R}^d}|f_\ell'(|y-u|^2)| \dr u + 4\int_{\mathbb{R}^d}|y-u|^2|f_\ell''(|y-u|^2)| \dr u \\
  & \leq \int_{\mathbb{R}^d}|f_\ell'(|z|^2)| \dr z + 4\int_{\mathbb{R}^d}|z|^2|f_\ell''(|z|^2)| \dr z,
\end{align*}
so that both derivatives are bounded. Let us now show that for any $y \in \mathbb{R}^d$, $\nu \mapsto \delta_\mu g_\ell(\nu)(y)$ possesses a linear derivative. For any $y \in \mathbb{R}^d$ and any $\mu, \in \mathscr{M}_c^d$, we have
\[
 \delta_\mu g_\ell(\nu)(y) - \delta_\mu g_\ell(\mu)(y) =\int_{\mathbb{R}^d}f_\ell^u(y) \int_0^1\int_{\mathbb{R}^d}\exp(-\lambda\langle \nu, f_\ell^u \rangle - (1-\lambda)\langle \mu, f_\ell^u \rangle)f_\ell^u(y')(\nu - \mu)(\dr y') \dr \lambda \dr u.
\]
Again, since $\mu$, $\nu$ and $f_\ell$ have compact supports, we can exchange the above integrals which implies that $\delta_\mu g_\ell$ has a linear derivative and that
\[
 \delta_{\mu \mu}^2 g_\ell(\nu)(y,y') =  \int_{\mathbb{R}^d}\exp(-\langle \nu, f_\ell^u \rangle)f_\ell^u(y)f_\ell^u(y') \dr u.
\]
Moreover, by Cauchy-Schwarz inequality, we have
\[
 |\delta_{\mu \mu}^2 g_\ell(\nu)(y,y')|^2 \leq  \int_{\mathbb{R}^d}f_\ell^u(y)^2 \dr u \int_{\mathbb{R}^d}f_\ell^u(y')^2 \dr u = \Big(\int_{\mathbb{R}^d}f_\ell(|z|^2)^2 \dr z\Big)^2,
\]
so that it is bounded. With similar arguments, one can easily show that $(y,y')\mapsto \delta_{\mu \mu}^2 g_\ell(\nu)(y,y')$ is twice differentiable and that the corresponding derivatives are bounded. This shows that $g_\ell\in\mathrm{C}_F(\mathrm{E})$. 

Let us now show that $\exp(-g_\ell)\in \mathrm{L}(B)$. We consider some $a, b, M, T >0$ and we set $\mathrm{K} = [0,T]\times \mathscr{K}_{a, b} \times [-M, M]^{nd}$. Let us show that for any $\ell \in \mathbb{N}$,
\begin{equation}
 \mathbb{E}\Big[\sup_{(t, \nu, x)\in\mathrm{K}}\exp(-g_\ell(\nu + \theta_t^x))\Big] < \infty
\end{equation}
where as usual $\theta_t^x = \sum_{k=1}^n \int_0^t \delta_{x^k + B_r^k} \dr r$, $x = (x^1, \cdots, x^n)$ and $(B_t)_{t\geq0} = (B^1_t,\cdots,B^n_t)_{t\ge 0}$ is a 
$n\times d$-dimensional Brownian motion. In the following, we first fix some $(t, \nu, x) \in \mathrm{K}$. Since for any $u\in\mathbb{R}^d$, the map $t \mapsto \langle\theta_t^x, f_\ell^u \rangle$ is non-decreasing, it comes that
\[
 \exp(-g_\ell(\nu  + \theta_t^x)) \leq \exp(-g_\ell(\nu + \theta_T^x)).
\]
Let us now introduce some notation: for any $\nu \in \mathscr{M}_c^d$ and any $R >0$, we set
\[
 \sausage_{\nu, R} = \{y \in \mathbb{R}^d, \: \dr(y, \mathrm{supp}(\nu)) \leq R\}.
\]
Consider now some $R > 0$ such that for any $M \geq R$, $f(M) = 0$. We first remark that for any $u\in\mathbb{R}^d$, we have
\[
 1 - \exp(-\langle \nu, f_\ell^u \rangle - \langle \theta_t^x, f_\ell^u \rangle) \leq \bm{1}_{\sausage_{\nu, R}\cup \sausage_{\theta_t^x, R}}(u).
\]
Indeed, if $u \notin \sausage_{\nu, R}\cup \sausage_{\theta_t^x, R}$, then $\langle \nu, f_\ell^u \rangle =  \langle \theta_t^x, f_\ell^u \rangle = 0$ so that $1 - \exp(-\langle \nu, f_\ell^u \rangle - \langle \theta_t^x, f_\ell^u \rangle) = 0$. Otherwise, it is smaller than $1$. As a consequence, we see that
\begin{equation*}
 -g_\ell(\nu + \theta_t^x) \leq m(\sausage_{\nu, R}) + m(\sausage_{\theta_t^x, R})
\end{equation*}
where $m$ denotes the Lebesgue measure. Moreover, it is straightforward to see that
\[
 m(\sausage_{\theta_t^x, R}) \leq \sum_{k=1}^n m(\sausage_{\theta_t^{x, k}, R})
\]
where for any $k\in\{1, \cdots, n\}$, $\theta_t^{x, k} = \int_0^t \delta_{x^k + B_s^k}\dr s$. By translation invariance of the Lebesgue measure, it is also clear that for any $k\in\{1, \cdots, n\}$, $m(\sausage_{\theta_t^{x, k}, R}) = m(\sausage_{\theta_t^{0, k}, R})$. Finally, since $\nu \in \mathscr{K}_{a, b}$, we have $m(\sausage_{\nu, R}) \leq (b + R)^{d}$. All in all, we showed that
\begin{equation}\label{maj_vol}
 \mathbb{E}\Big[\sup_{(t,\nu,x)\in \mathscr{K}}\e^{-g_\ell(\nu  + \theta_t^x)}\Big] \leq \e^{(b+R)^d}\mathbb{E}\Big[\exp(m(\sausage_{\theta_T^{0, 1}, R}))\Big]^n,
\end{equation}
where we used the independence between the $n$ Brownian motions. Finally, $\mathbb{E}[\exp(m(\sausage_{\theta_T^{0,1}, R}))]$ is the exponential moment of the volume of the Wiener sausage of radius $R$ and it is finite, see for instance \cite[Theorem 4.2]{MR904262} or \cite{MR1096482}.
\end{proof}

\begin{proposition}\label{conv_volume}
 Let $(f_\ell)_{\ell\in\mathbb{N}}$ be a sequence of functions satisfying Assumption \ref{assump_sequence} and let $\nu \in \mathscr{M}_c^d$. If $m(\{y \in \mathbb{R}^d, \: \dr(y, \mathrm{supp}(\nu)) = 1\}) = 0$, then $g_\ell(\nu) \to -m(\sausage_\nu)$ as $\ell\to\infty$, where the functions $(g_\ell)_{\ell\in\mathbb{N}}$ are defined by \eqref{def_approx_saus}.
\end{proposition}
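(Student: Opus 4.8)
The plan is to show that the integrand $u \mapsto 1 - \exp(-\langle \nu, f_\ell^u\rangle)$ converges pointwise almost everywhere to the indicator $\bm{1}_{\sausage_\nu}$ as $\ell \to \infty$, and then to conclude by dominated convergence that $-g_\ell(\nu) = \int_{\mathbb{R}^d}(1 - \exp(-\langle \nu, f_\ell^u\rangle))\,\dr u \to \int_{\mathbb{R}^d}\bm{1}_{\sausage_\nu}(u)\,\dr u = m(\sausage_\nu)$, which is exactly the assertion.

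For the pointwise convergence, I would fix $u \in \mathbb{R}^d$ and set $r = \dr(u, \mathrm{supp}(\nu))$; since $\mathrm{supp}(\nu)$ is compact, this distance is attained at some $v_0 \in \mathrm{supp}(\nu)$. I distinguish three cases. If $r < 1$, then $|u - v_0|^2 = r^2 < 1$, and by continuity there exist $\epsilon, \delta > 0$ with $|u - v|^2 \le 1 - \delta < 1$ for all $v \in B(v_0, \epsilon)$. Using that $f_\ell$ is decreasing together with $\nu(B(v_0, \epsilon)) > 0$ (by definition of the support), I obtain $\langle \nu, f_\ell^u \rangle \ge f_\ell(1 - \delta)\,\nu(B(v_0, \epsilon))$, which tends to $+\infty$ because $f_\ell(1-\delta) \to \infty$ by Assumption~\ref{assump_sequence}~(i); hence $1 - \exp(-\langle \nu, f_\ell^u\rangle) \to 1 = \bm{1}_{\sausage_\nu}(u)$. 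If $r > 1$, then $|u - v|^2 \ge r^2 > 1$ for every $v \in \mathrm{supp}(\nu)$, so monotonicity gives $\langle \nu, f_\ell^u \rangle \le f_\ell(r^2)\,\nu(\mathbb{R}^d) \to 0$, whence $1 - \exp(-\langle \nu, f_\ell^u\rangle) \to 0 = \bm{1}_{\sausage_\nu}(u)$. The remaining set $\{u : r = 1\}$ is exactly $\{y : \dr(y, \mathrm{supp}(\nu)) = 1\}$, which by hypothesis is Lebesgue-null, so the pointwise convergence holds almost everywhere.

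For the domination, I would invoke Assumption~\ref{assump_sequence}~(ii): there is $R > 0$ with $f_\ell(x) = 0$ for $x \ge R$, uniformly in $\ell$. Consequently $\langle \nu, f_\ell^u \rangle = 0$, and the integrand vanishes, whenever $\dr(u, \mathrm{supp}(\nu)) \ge \sqrt{R}$. Thus the integrand is bounded by $\bm{1}_{\{u \,:\, \dr(u, \mathrm{supp}(\nu)) \le \sqrt{R}\}}$, independently of $\ell$, and this dominating function is integrable because $\mathrm{supp}(\nu)$ is compact, so the set $\{u : \dr(u, \mathrm{supp}(\nu)) \le \sqrt{R}\}$ is bounded and has finite Lebesgue measure. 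Dominated convergence then yields the claim.

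The main obstacle is the case $r < 1$: one must upgrade the pointwise divergence $f_\ell(x) \to \infty$ on $[0,1)$ to divergence of $\langle \nu, f_\ell^u\rangle$, which requires that $\nu$ genuinely charges a neighborhood of the nearest point $v_0$. This is precisely where the defining property of the topological support (every open neighborhood of a support point carries positive mass) enters, combined with the monotonicity of $f_\ell$ to produce the uniform lower bound $f_\ell(1-\delta)$. The Lebesgue-null hypothesis on the sphere $\{r = 1\}$ is what lets us discard the genuinely delicate boundary, where $f_\ell(1) = 1$ keeps the integrand at an intermediate value that need not converge to either $0$ or $1$.
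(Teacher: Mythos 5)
Your proof is correct and follows essentially the same route as the paper: pointwise a.e.\ convergence of $u \mapsto 1-\exp(-\langle \nu, f_\ell^u\rangle)$ to $\bm{1}_{\sausage_\nu}$ via the support property and monotonicity of $f_\ell$ for $r<1$, the hypothesis on the unit-distance level set to discard the boundary, and the compact support of $f_\ell$ to produce an integrable dominating indicator. The only (cosmetic) differences are that you take $v_0$ to be the minimizer of the distance and bound $\langle \nu, f_\ell^u\rangle \leq f_\ell(r^2)\nu(\mathbb{R}^d)$ in the case $r>1$, where the paper instead applies dominated convergence with the bound $f_\ell \leq f_\ell(1)=1$.
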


\begin{proof}
 Let $u\in \{y \in \mathbb{R}^d, \: \dr(y, \mathrm{supp}(\nu)) < 1\}$ be fixed. Therefore, there exists $y \in \mathrm{supp}(\nu)$ such that $r:= |u-y| < 1$. Let $r' = (1-r) / 2 < 1$, then for any $z\in B(y,r')$, we have $|u-z| \leq (1+r) / 2 =: \bar{r} < 1$. As a consequence, we have for any $\ell\in\mathbb{N}$,
 \[
  \langle \nu, f_\ell^u \rangle = \int_{\mathbb{R}^d}f_\ell(|u-z|^2)\nu(\dr z) \geq \int_{B(y,r')}f_\ell(|u-z|^2)\nu(\dr z) \geq f_\ell(\bar{r}^2) \nu(B(y,r')),
 \]
where we used in the last inequality that $f_\ell$ is decreasing. 
Since $y \in \mathrm{supp}(\nu)$, we have $\nu(B(y,r')) > 0$ and we see that $\langle \nu, f_\ell^u \rangle \to \infty$ as $\ell\to\infty$. We showed that, if $u\in \{y \in \mathbb{R}^d, \: \dr(y, \mathrm{supp}(\nu)) < 1\}$, then $1- \exp(-\langle \nu, f_\ell^u \rangle) \to 1$ as $\ell\to\infty$.

\smallskip
Consider now some $u\in \{y \in \mathbb{R}^d, \: \dr(y, \mathrm{supp}(\nu)) > 1\}$. We have
\[
 \langle \nu, f_\ell^u \rangle = \int_{\mathrm{supp}(\nu)}f_\ell(|u-z|^2)\nu(\dr z).
\]
For any $z \in\mathrm{supp}(\nu)$, we have $|u-z| > 1$ and therefore $f_\ell(|u-z|^2) \to 0$ as $\ell\to\infty$ by assumption. Also, since $f_\ell$ is decreasing, we have $f_\ell(|u-z|^2) \leq f_\ell(1) = 1$ for any $z \in\mathrm{supp}(\nu)$ and since $\nu$ has finite mass, we can apply the dominated convergence theorem, which tells us that $\langle \nu, f_\ell^u \rangle \to0$ as $\ell \to \infty$. 

\smallskip
Since $m(\{y \in \mathbb{R}^d, \: \dr(y, \mathrm{supp}(\nu)) = 1\}) = 0$, we conclude that for a.e. $u\in\mathbb{R}^d$, we have
\[
 1- \exp(-\langle \nu, f_\ell^u \rangle) \longrightarrow\bm{1}_{\{y \in \mathbb{R}^d, \: \dr(y, \mathrm{supp}(\nu)) < 1\}}(u) \quad \text{as }\ell\to\infty.
\]
It remains to dominate $1- \exp(-\langle \nu, f_\ell^u \rangle)$ in order to apply the dominated convergence theorem. Since there exists $R >0$, such that for any $\ell\in\mathbb{N}$ and any $M \geq R$, $f_\ell(M) = 0$, we see that for any $u \in \{y \in \mathbb{R}^d, \: \dr(y, \mathrm{supp}(\nu)) > R\}$, $\langle \nu, f_\ell^u \rangle = 0$. Hence, for any $u\in\mathbb{R}^d$ and any $\ell \in\mathbb{N}$, we have
\[
 1- \exp(-\langle \nu, f_\ell^u \rangle) \leq \bm{1}_{\{y \in \mathbb{R}^d, \: \dr(y, \mathrm{supp}(\nu)) \leq R\}}(u),
\]
and the right-hand-side is integrable since $\nu$ has compact support. All in all, we get the convergence $g_\ell(\nu) \to -m(\{y \in \mathbb{R}^d, \: \dr(y, \mathrm{supp}(\nu)) <1\}) =-m(\sausage_\nu)$ as $\ell\to\infty$.
\end{proof}

In order to apply the previous result, one needs to prove that the boundary of the Wiener sausage has zero Lebesgue measure. This result is of common knowledge, nevertheless, in order to be as self-contained as possible, we give an extended proof of it.

\begin{proposition}\label{null_leb}
 For any $x\in\mathbb{R}^d$ and any $t>0$, it holds that a.s.
 \[
  m(\{y \in\mathbb{R}^d, \: \dr(y, \mathrm{supp}(\theta_t^x)) = 1\}) = 0
 \]
where $\theta_t^x = \sum_{k=1}^n\int_0^t\delta_{x^k + B_s^k} \dr s$ and 
\((B^1_t)_{t\geq0}, \cdots, (B^n_t)_{t\geq0}\)
are $n$ independent \(d\)-dimensional Brownian 
motions.
\end{proposition}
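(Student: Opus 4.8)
The plan is to reduce the statement to a one-dimensional question about the distance from a fixed point to a single Brownian trace, and then to exploit the regularity of the radial process at a positive level. First I would record that $\mathrm{supp}(\theta_t^x) = \bigcup_{k=1}^n\{x^k + B_s^k : s \in [0,t]\}$ is compact, so that for every $y$,
\[
 \dr(y, \mathrm{supp}(\theta_t^x)) = \min_{1\le k \le n} m_k(y), \qquad m_k(y) := \min_{s\in[0,t]} |y - x^k - B_s^k|.
\]
By Tonelli's theorem,
\[
 \mathbb{E}\big[m(\{y : \dr(y, \mathrm{supp}(\theta_t^x)) = 1\})\big] = \int_{\mathbb{R}^d}\mathbb{P}\big(\dr(y, \mathrm{supp}(\theta_t^x)) = 1\big)\,\dr y,
\]
and since the random variable on the left is non-negative, it is enough to show $\mathbb{P}(\dr(y,\mathrm{supp}(\theta_t^x)) = 1) = 0$ for each fixed $y$. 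Using $\{\min_k m_k = 1\} \subseteq \bigcup_k\{m_k = 1\}$, this reduces further to proving that, for each fixed $y$ and each $k$, the variable $m_k(y)$ has no atom at $1$.

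Fix $y$ and $k$ and set $R_s = |y - x^k - B_s^k|$, a Bessel-type process started at $R_0 = |y - x^k|$. If $R_0 < 1$ then $m_k \le R_0 < 1$ and there is nothing to prove, so assume $R_0 \ge 1$ and let $\tau = \inf\{s \ge 0 : R_s \le 1\}$; by continuity $R_\tau = 1$. Since $\{m_k = 1\}$ forces $R$ to attain the value $1$, we have $\{m_k = 1\} \subseteq \{\tau \le t\}$, and I would split this event according to whether $\tau < t$ or $\tau = t$.

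The two ingredients are as follows. By It\^o's formula, while $R_s > 0$,
\[
 \dr R_s = \frac{x^k + B_s^k - y}{R_s}\cdot \dr B_s^k + \frac{d-1}{2 R_s}\,\dr s,
\]
and since $R_\tau = 1 > 0$ the process stays away from the origin near $\tau$; there the stochastic integral is a standard Brownian motion $\beta$ (its bracket equals $\dr s$ by L\'evy's characterization) and the drift is locally bounded. By the strong Markov property, $(\beta_{\tau+u} - \beta_\tau)_{u\ge 0}$ is a Brownian motion independent of $\mathcal{F}_\tau$, so it takes negative values of order $\sqrt{u}$ along a sequence $u\downarrow 0$, which dominates the $O(u)$ drift; hence $R_{\tau+u} < 1$ for arbitrarily small $u > 0$ almost surely. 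Consequently, on $\{\tau < t\}$ one has $m_k < 1$ almost surely, so $\{m_k = 1\}\cap\{\tau < t\}$ is negligible. On $\{\tau = t\}$, continuity gives $R_t = 1$, and since $R_t = |y - x^k - B_t^k|$ with $B_t^k$ a non-degenerate Gaussian vector, $R_t$ has a density, so $\mathbb{P}(\tau = t) \le \mathbb{P}(R_t = 1) = 0$. Combining the two cases yields $\mathbb{P}(m_k = 1) = 0$, and tracing back through the reductions gives $\mathbb{E}[m(\{\dr(\cdot,\mathrm{supp}(\theta_t^x)) = 1\})] = 0$, hence the set has zero Lebesgue measure almost surely.

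The only genuinely delicate point is the regularity claim that $R$ passes strictly below level $1$ immediately after touching it; everything else is bookkeeping with Tonelli and a union bound. This is where I would be most careful, relying on the fact that the driving martingale in the radial SDE is a genuine Brownian motion and that the singular drift of the Bessel process is irrelevant here, since all the relevant crossings occur at height $1$, safely away from the origin --- which is also what makes the argument uniform in the dimension $d$, including the case $d = 1$.
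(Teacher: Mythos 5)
Your proof is correct, but it takes a genuinely different route from the paper's. The paper avoids any pathwise analysis: it uses Brownian scaling to write $\mathbb{E}[m(\sausage_{t,R})]$ as $R^d$ times the expected volume of a unit-radius sausage run for a rescaled time, then lets the radius $a_\ell\uparrow 1$ and identifies the two monotone limits (the open sausage from inside, $\bigcap_\ell \sausage_{t/a_\ell,1}=\sausage_{t,1}$ from outside), so that the expected measure of the boundary sphere-set is squeezed to zero. Your argument instead goes through Tonelli to reduce to the statement that, for a fixed $y$ and a single particle, the running minimum $\min_{s\le t}|y-x^k-B^k_s|$ has no atom at $1$, which you establish via the radial SDE, L\'evy's characterization, the strong Markov property at the first hitting time of level $1$, and the absolute continuity of $B^k_t$ to dispose of the boundary case $\tau=t$. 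The key step --- that the radial process immediately passes strictly below a positive level it has just reached --- is correctly justified (the driving martingale is a true Brownian motion, its $\sqrt{u}$ fluctuations dominate the $O(u)$ drift, which is locally bounded since everything happens at height $1$, away from the origin), and the case distinctions ($R_0<1$, $\tau<t$, $\tau=t$) are exhaustive. The paper's proof is shorter and uses only scaling and monotone convergence, but is tied to the exact self-similarity of Brownian motion; yours is more robust (it would survive for more general non-degenerate diffusions and for any fixed radius) at the cost of some stochastic-calculus bookkeeping. Both are valid.
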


\begin{proof}
 It is enough to consider the case $n=1$ and by translation invariance, we only need to consider the case $x = 0$. For any $R > 0$, we denote by $\sausage_{t, R}$ the set $\{y \in\mathbb{R}^d, \: \dr(y, \mathrm{supp}(\theta_t^x)) \leq R\}$. Using the scale invariance of the Brownian motion, it is easy to see that
 \[
  \mathbb{E}[m(\sausage_{t, R})] = R^d \mathbb{E}[m(\sausage_{t/R, 1})].
 \]
For any $\ell\geq2$, let $a_\ell = 1 - 1/\ell$. By the monotone convergence theorem, we see that
\[
 \mathbb{E}[m(\sausage_{t, a_\ell})] \longrightarrow \mathbb{E}[m(\{y \in\mathbb{R}^d, \: \dr(y, \mathrm{supp}(\theta_t^x) < 1)\})] \quad \text{as }\ell\to\infty.
\]
On the other hand, we also see by the monotone convergence theorem that
\[
 \mathbb{E}[m(\sausage_{t/a_\ell, 1})] \longrightarrow \mathbb{E}\Big[m\Big(\bigcap_{\ell\geq2}\sausage_{t/a_\ell, 1}\Big)\Big] = \mathbb{E}[m(\sausage_{t, 1})] \quad \text{as }\ell \to\infty,
\]
where we used in the last equality that $\cap_{\ell\geq2}\sausage_{t/a_\ell, 1} = \sausage_{t, 1}$. Indeed, if $y\in \cap_{\ell\geq 2} \sausage_{t/a_\ell, 1}$, then for any $\ell\geq 2$, there exists $s_\ell \leq t /a_\ell$ such that $|y-B_{s_\ell}| \leq 1$.
Since the sequence $(s_\ell)_{\ell\geq 2}$ is bounded, we can extract a subsequence which converges to some $s$. 
Plainly, we have $s \leq t$ and by continuity of the Brownian motion, $|y-B_{s}| \leq 1$, which shows that $y \in \sausage_{t, 1}$. 
The other inclusion is straightforward. 
All in all, we conclude that
\[
 \mathbb{E}[m(\{y \in\mathbb{R}^d, \: \dr(y, \mathrm{supp}(\theta_t^x)) = 1\})] = 0 
\]
which completes the proof.
\end{proof}
We can finally give the proof of the main result of this section.

\begin{proof}[Proof of Theorem \ref{main_thm_application}]
 Let us fix some $T > 0$ and some $\ell\in\mathbb{N}$. By Proposition \ref{functions_derivable}, we can apply Theorem \ref{main_thm} with the functions $f=0$ and $g_\ell$, and consider the process $(X_t^{*, \ell})_{t\in[0,T]}$ defined by~\eqref{optimal_process} on some filtered probability space $(\Omega_\ell, \mathcal{G}_\ell, (\mathcal{G}_{t,\ell})_{t\in[0,T]}, \mathbb{Q}_\ell)$. For any continuous and bounded functional $\mathrm{F}:\mathcal{C}_T \to \mathbb{R}$, it holds that
\[
 \mathbb{E}_{\mathbb{Q}_\ell}[\mathrm{F}((X_t^{*,\ell})_{t\in[0,T]})] = \mathcal{Z}_\ell^{-1}\mathbb{E}[\mathrm{F}((B_t)_{t\in[0,T]})\e^{-g_\ell(\theta_{T})}]
\]
where $B_t =   (B^1_t,\cdots,B^n_t)$ is  a Brownian motion on \((\mathbb{R}^d)^n\), $\theta_t = \sum_{k=1}^n \int_0^t \delta_{B_s^k}\dr s$ is the occupation measure and the normalization constant is defined as $\mathcal{Z}_\ell = \mathbb{E}[\e^{-g_\ell(\theta_{T})}]$. By Propositions \ref{conv_volume} and \ref{null_leb}, it holds that a.s., $\exp(-g_\ell(\theta_T)) \to \exp(m(\sausage_{\theta_T}))$ as $\ell\to\infty$. Moreover, we get from \eqref{maj_vol} that
\[
 \mathbb{E}\Big[\e^{-g_\ell(\theta_T)}\Big] \leq \mathbb{E}\Big[\exp(m(\sausage_{\theta_T^{1}, R}))\Big]^n,
\]
where $R$ is the constant from Assumption \ref{assump_sequence}, $\theta_t^1 = \int_0^t\delta_{B_s^1}\dr s$ and
\[
 \sausage_{\theta_T^{1}, R} = \{y\in\mathbb{R}^d, \: \dr(y, \mathrm{supp}(\theta_T^1)) \leq R\}.
\]
Again, the quantity $\mathbb{E}[\exp(m(\sausage_{\theta_T^{1}, R}))]$ is finite and we can apply the dominated convergence theorem, which completes the proof.
\end{proof}

\subsection*{Acknowledgments} The authors thank Yoan Tardy for useful discussions on the topic. LB is funded by the ANR Grant NEMATIC (\href{https://anr.fr/Projet-ANR-21-CE45-0010}{ANR-21-CE45-0010}). The authors thank the research project DREAMS/NE\-MA\-TIC.

\bibliographystyle{abbrv}


{
\bigskip
\footnotesize

L.~Béthencourt, \textsc{Laboratoire J.A.Dieudonné
UMR CNRS-UNS N°7351
Université Côte d'Azur
Parc Valrose
06108 NICE Cedex 2
FRANCE}\par\nopagebreak \texttt{Loic.Bethencourt@univ-cotedazur.fr}

R.~Catellier, \textsc{Laboratoire J.A.Dieudonné
UMR CNRS-UNS N°7351
Université Côte d'Azur
Parc Valrose
06108 NICE Cedex 2
FRANCE}\par\nopagebreak \texttt{Remi.Catellier@univ-cotedazur.fr}

E.~Tanré,\par\nopagebreak \texttt{Etienne.Tanre@inria.fr}
}

\end{document}